\newcommand\Z{\mathbb Z}
\newcommand\N{\mathbb N}
\newcommand\sphere{\mathbb S^2}
\newcommand\rs{\widehat{\mathbb{C}}}
\newcommand\phibar{\bar{\phi}}
\newtheorem{theorem}{Theorem}[section]
\newtheorem{proposition}[theorem]{Proposition}
\newtheorem{lemma}[theorem]{Lemma}
\newtheorem{corollary}[theorem]{Corollary}
\theoremstyle{definition}
\newtheorem{definition}[theorem]{Definition}
\definecolor{reallylightgray}{gray}{0.90}
\newtheorem{remark}[theorem]{Remark}
\begin{document}

\title{Quadratic Thurston maps with few postcritical points}

\begin{author}[Kelsey]{Gregory Kelsey}
\email{gkelsey@bellarmine.edu}
\address{Bellarmine University, 2001 Newburg Rd, Louisville, KY 40205}
\end{author}

\begin{author}[Lodge]{Russell Lodge}
\email{Russell.Lodge@indstate.edu}
\address{Department of Mathematics and Computer Science, Indiana State University, Terre Haute, IN 47809}
\end{author}

\keywords{Thurston map \and self-similar group}
\subjclass[2010]{37F20 \and 20F65}

\thanks{Both authors gratefully acknowledge the support of the AIM SQuaRE program 2013-2015. 
The second author was also supported by the Deutsche Forschungsgemeinschaft.
The authors also thank Kevin Pilgrim, Walter Parry, and the anonymous referee for their helpful comments on early drafts of this article. 
Sarah Koch and Bill Floyd provided valuable perspectives and assistance, and the authors thank them as well. }

\maketitle

\begin{abstract}
We use the theory of self-similar groups to enumerate all combinatorial classes of non-Euclidean quadratic Thurston maps with fewer than five postcritical points. 
The enumeration relies on our computation that the corresponding maps on moduli space can be realized by quadratic rational maps with fewer than four postcritical points.

\end{abstract}

\section{Introduction}
\label{sec:intro}

Holomorphic dynamics in one complex variable is largely concerned with the study of rational maps as dynamical systems, along with their parameter spaces.   Postcritically finite rational maps have attracted much attention due to their relative simplicity and their structural significance in parameter space.  Furthermore, W. Thurston has given a powerful characterization and rigidity theorem for postcritically finite rational maps up to combinatorial equivalence (see Theorem \ref{thm:Thurston'sThm}). With this tool at our disposal, we are free to consider rational maps in the more flexible topological category.

\begin{definition}[Thurston map]
Let $f:\sphere\to\sphere$ be an orientation-preserving branched cover of degree $d\geq 2$ whose set of critical points is denoted $C_f$.  Then the \emph{postcritical set} of $f$ is given by $P_f:=\bigcup_{i>0} f^{\circ i}(C_f)$. If $P_f$ is finite, $f$ is said to be a \emph{Thurston map}.  
\end{definition}

Thurston maps will be considered up to the following equivalence relation, which can roughly be thought of as ``conjugacy up to homotopy."

\begin{definition}[Combinatorial equivalence]
Two Thurston maps $f$ and $g$ are said to be \emph{combinatorially equivalent} if and only if there exist orientation-preserving homeomorphisms $h_0, h_1:(\sphere,P_f)\to(\sphere,P_g)$ so that
\[h_0\circ f= g\circ h_1\]
and $h_0$ is isotopic to $h_1$ through a one parameter family of homeomorphisms $h_t:(\sphere,P_f)\to(\sphere,P_g)$ where $0\leq t\leq 1$.
\end{definition}


A Thurston map is called \emph{Euclidean} if its orbifold Euler characteristic is zero (cf. \cite[Definition 1.1]{CFPP}). A quadratic Thurston map with four postcritical points is Euclidean if and only if it has no periodic critical point. 

We will state Thurston's theorem in the special case when $f$ is a non-Euclidean Thurston map with $|P_f|=4$ (the general statement is found in \cite{DH93}). A simple closed curve in $\sphere$ is said to be \emph{essential} if it separates the postcritical set into two pairs of points.  Two essential curves are said to be \emph{homotopic} if they are homotopic in $\sphere\setminus P_f$. The collection of all homotopy classes of essential curves is denoted $\mathcal{C}_f$. 

Let $\tilde{\gamma}_1,...,\tilde{\gamma}_k$ be the collection of essential components of $f^{-1}(\gamma)$, and let $d_i$ denote the mapping degree of $\tilde{\gamma_i}\rightarrow\gamma$. Then $\gamma\in\mathcal{C}_f$ is said to be an \emph{obstructing curve for $f$} if $\gamma$ is homotopic to some essential component of $f^{-1}(\gamma)$ and $\sum_{i=1}^{k}\frac{1}{d_i}\geq 1$.

\begin{theorem}[Thurston's theorem for $|P_f|=4$]\label{thm:Thurston'sThm}
Let $f$ be a Thurston map with four postcritical points that is not Euclidean.  Then $f$ is equivalent to a rational map if and only if there is no obstructing curve.  If the rational map exists, it is unique up to M\"obius conjugacy.
\end{theorem}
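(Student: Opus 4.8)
The plan is to follow Thurston's iteration argument as formalised by Douady and Hubbard \cite{DH93}, taking advantage of the fact that $|P_f|=4$ makes the relevant moduli space one-dimensional. Write $\mathcal{T}=\mathcal{T}(\sphere,P_f)$ for the Teichm\"uller space of complex structures on $\sphere$ marked by $P_f$; since $|P_f|=4$ it is biholomorphic to the unit disk, and we equip it with the Teichm\"uller metric, which here coincides with the hyperbolic metric. Pulling a complex structure back by $f$ produces a marked structure on $\sphere$ with marked set $f^{-1}(P_f)\supseteq P_f\cup C_f$, and forgetting the marked points outside $P_f$ yields the Thurston pullback map $\sigma_f\colon\mathcal{T}\to\mathcal{T}$; it factors as the pullback map $\mathcal{T}(\sphere,P_f)\to\mathcal{T}(\sphere,f^{-1}(P_f))$, which is an isometric embedding, followed by the $1$-Lipschitz forgetful map back to $\mathcal{T}(\sphere,P_f)$. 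I would first record the standard facts that $\sigma_f$ is holomorphic and weakly contracting, and that, because $f$ is not Euclidean, it is strictly contracting on compact sets \cite{DH93}; in particular $\sigma_f$ is not an automorphism of $\mathcal{T}$ and has at most one fixed point. A point $\tau$ is fixed by $\sigma_f$ exactly when the uniformizing maps for $\tau$ and for $f^*\tau$ conjugate $f$ to a holomorphic self-map of $\rs$, that is, to a rational map; conversely a rational representative of $f$ determines a fixed point of $\sigma_f$, with the uniformization canonical up to post-composition by a M\"obius transformation. This establishes the last sentence of the theorem and reduces the equivalence to the assertion that $f$ is unobstructed if and only if $\sigma_f$ has a fixed point.

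For the direction ``$f$ rational $\Rightarrow$ unobstructed'' I would invoke the standard modulus estimate for rational maps from \cite{DH93}: if $g$ is rational and $\Gamma$ is a $g$-stable multicurve, then Thurston's transition matrix of $\Gamma$ has leading eigenvalue at most $1$, with equality only when $g$ is Euclidean. One proves this by pulling back the maximal-modulus annulus homotopic to a component of $\Gamma$, stacking the disjoint essential preimage annuli inside it with the Gr\"otzsch inequality, and analysing the equality case. Since $|P_f|=4$, every essential multicurve is a single curve and this matrix is just the scalar $\lambda_f$; hence for non-Euclidean rational $g$, every curve $\gamma$ with $\mu_g(\gamma)=\gamma$ satisfies $\lambda_g<1$, so no obstructing curve exists.

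For the converse, ``unobstructed $\Rightarrow$ rational'', I would argue the contrapositive through $\sigma_f$. Suppose $\sigma_f$ has no fixed point. Being a holomorphic self-map of the disk that is not an automorphism, $\sigma_f$ has, by the Denjoy--Wolff theorem, a boundary point towards which every orbit converges, so $\tau_n=\sigma_f^{\circ n}(\tau_0)$ eventually leaves every compact subset of $\mathcal{T}$. Using the length and extremal-length comparisons of \cite{DH93}, one shows that this escape is detected by an essential curve whose hyperbolic length on $\tau_n$ tends to $0$; the collection of such pinching curves is carried into itself by $\mu_f$, hence---being finite---contains an $f$-stable sub-collection, and the pinching rates force the transition matrix of that sub-collection to have leading eigenvalue at least $1$. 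Since $|P_f|=4$, this sub-collection is a single curve $\gamma$ with $\mu_f(\gamma)=\gamma$ and $\lambda_f\ge 1$, i.e.\ an obstructing curve. Equivalently, if no obstructing curve exists then $\sigma_f$ has a fixed point and $f$ is rational.

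The hard part is the pinching analysis just described: showing that an escaping $\sigma_f$-orbit is witnessed by a pinching curve, that $\mu_f$ permutes pinching curves, and that their pinching rates combine into the eigenvalue bound $\lambda_f\ge 1$. This is precisely the quantitative control of extremal length under $f$ that forms the technical core of \cite{DH93}; the hypothesis $|P_f|=4$ only simplifies the surrounding bookkeeping---the moduli space is one-dimensional, $f$-stable multicurves are single curves, and Thurston's matrix reduces to the scalar $\lambda_f$---and leaves this analytic heart untouched.
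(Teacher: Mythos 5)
The paper does not prove this statement at all: it is quoted as Thurston's theorem, with the proof deferred to Douady--Hubbard \cite{DH93}, and your outline is exactly that standard argument (pullback map $\sigma_f$ on the one-dimensional Teichm\"uller space, strict contraction in the non-Euclidean case giving uniqueness, fixed points corresponding to rational representatives, and the Gr\"otzsch/extremal-length analysis linking escaping orbits to an invariant curve with $\lambda_f\ge 1$). Your sketch is structurally correct, and deferring the pinching/extremal-length core to \cite{DH93} is consistent with how the paper itself treats the result.
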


Applications of the general statement of Thurston's theorem include the combinatorial classifications of large families of postcritically finite rational maps \cite{Poirier,Newton}.


It is natural to seek an enumeration of all combinatorial classes (see for instance Problem 5.2 posed by McMullen \cite{McM}). Few explicit listings exist, with the following exceptions.  First is the enumeration of all non-polynomial hyperbolic rational maps of degree 3 or less with four or fewer postcritical points \cite{KMPcensus}.  This is done by listing all possible dynamical portraits (i.e. combinatorial descriptions of critical and postcritical dynamics), and then finding coefficients of the corresponding rational functions by solving equations derived from the portraits. However, enumeration is substantially more difficult in the presence of combinatorial classes with no rational representative. The main results of this type come in the context of quadratic and cubic Thurston maps with four postcritical points \cite{K01,BN06,Lod}.  For example, Bartholdi and Nekrashevych enumerate all maps with the same dynamical portrait as $z^2+i$ (see Figure \ref{ramportfig}) using special features of self-similar groups associated to polynomials. It is known that a large family of polynomial dynamical portraits admit obstructed combinatorial classes \cite{Kel}. 

\begin{figure}[h]
$$\xymatrix{0\ar@{=>}[r] & i\ar[r] & -1+i \ar@/^/[r] & -i \ar@/^/[l] &\infty\ar@{=>}@(ur,dr)[]}$$
\caption{The dynamical portrait of $z^2+i$, which is realized by both obstructed and unobstructed Thurston maps}
\label{ramportfig}
\end{figure}

The main goal of the present work is to prove the following (see \S \ref{sec:ids}):

\begin{theorem}[Enumeration of combinatorial classes]
\label{thm:classification}
Let $f$ be a quadratic non-Euclidean Thurston map with four or fewer postcritical points. If $f$ has
\begin{itemize}
\item three or fewer postcritical points, then it is equivalent to a map listed in Table \ref{tbl:Quad3}.
\item four postcritical points and no obstructing curve, then it is equivalent to a map listed in Table \ref{tbl:Hurwitz1}.
\item four postcritical points and is not equivalent to a rational map, then it is equivalent to a map in Table \ref{tbl:Obstructed}.
\end{itemize}
\end{theorem}

\subsection{Reading the Tables}
Table \ref{tbl:Quad3} (on page \pageref{tbl:Quad3}) deals with the case of three or fewer postcritical points. Such Thurston maps are all equivalent to rational maps, so in this case our enumeration amounts to listing all possible dynamical portraits (subject to Riemann-Hurwitz and local degree restrictions) and computing coefficients. Our table lists the dynamical portraits and fixed points of these maps and organizes them by the number of critical postcritical points because we will use this information in our enumeration of non-Euclidean Thurston maps with exactly four postcritical points.

Table \ref{tbl:Hurwitz1} (on page \pageref{tbl:Hurwitz1}) deals with non-Euclidean Thurston maps with exactly four postcritical points that do not admit obstructing curves. Each of these maps can be decomposed as a map $g$ with three or fewer postcritical points from Table \ref{tbl:Quad3} postcomposed by a M\"obius transformation 
  $M_{\bullet,0}, M_{\bullet,1},$ or $M_{\bullet,\infty}$ that respectively induce the following permutations: \[ (\bullet\; 0)(1\;\infty),\qquad (\bullet\; 1)(0\;\infty),\qquad (\bullet \;\infty)(0\;1)\]
for some  $\bullet\in\rs\setminus\{0,1,\infty\}$ that is specified in the table (see also Proposition \ref{prop:RationalQ4Comps}). We further show in Theorem \ref{Thm:mapOnModuliSpace} that the map on moduli space associated to the Thurston map $f$ is in fact $g$ (our use of the notation $g$ for the map on moduli space is consistent with \cite{K} in this sense). 

Table \ref{tbl:Obstructed} (on page \pageref{tbl:Obstructed}) deals with the obstructed cases. Apart from the exceptional case of $g(z)=z^2$ which requires a little more care, we take a rational map presented in terms of the same data as before ($g$-map, M\"obius transformation, and fixed point of $g$) and list the twists that can be applied to the rational map to obtain distinct combinatorial classes of obstructed maps. These twists are defined in terms of loops $\alpha$, $\beta$ in the moduli space (see $\S$\ref{sec:wreath} for the choice of these loops and the point-pushing isomorphism that identifies them with twists).


\subsection{Outline}

The remainder of this paper is organized as follows:

In $\S$\ref{sec:Moduli} we develop the Teichm\"uller theory to show that certain rational maps with small postcritical set can be composed with a M\"obius transformation to obtain their corresponding map on moduli space (Theorem \ref{Thm:mapOnModuliSpace}). We call such maps M\"obius reducible.
In particular, this theorem provides a  sufficient condition for the pullback map of a rational marked cover with exactly four marked points to cover a map (for general reasons one only expects a correspondence) on moduli space. Such results were previously limited to polynomials with periodic critical points and polynomials with exactly one critical point in the complex plane \cite[Theorems 5.17, 5.18]{K}.

In $\S$\ref{sec:RamPort} we specialize to the quadratic case. We enumerate the dynamical portraits of the maps under investigation and show that all rational quadratics with exactly four postcritical points are M\"obius reducible (Proposition \ref{prop:RationalQ4Comps}).


In $\S$\ref{sec:ssgrps} we provide background on self-similar group theory and introduce the fundamental algebraic objects and actions that we will use to produce our enumeration.

In $\S$\ref{sec:wreath}, we use the $g$-map and M\"obius transformation decomposition to calculate the self-similar action for each of our rational maps in Tables \ref{tbl:Hurwitz1}. For each action we then compute the nucleus, which is a set that we will use for our calculations in the next section. An important result is that each of these actions is {\it sub-hyperbolic}, which means that each nucleus contains only finitely-many distinct actions (Theorem \ref{thm:subhyp}).

The mapping class bisets we use in our calculations are also the central object in Bartholdi and Dudko's work on the decidability of combinatorial equivalence \cite{BD16}.
These bisets are interesting objects in their own right. 
The question of which Thurston maps give rise to sub-hyperbolic mapping class bisets was asked in 
\cite{N09}, and our work provides the first answer for non-polynomial maps. 

In $\S$\ref{sec:TwistedNET} we prove the main theorem.

For non-Euclidean maps with four postcritical points, we rely on a crucial fact about the transitivity of the mapping class action. A \emph{twist} of $f$ is a map of the form $h \circ f$ where $h:(\sphere,P_f)\to(\sphere,P_f)$ is an orientation-preserving homeomorphism with $h|_{P_f}=id$; note that $h\circ f$ is again a Thurston map with the same dynamical portrait. A special feature of quadratic Thurston maps with four postcritical points is that every combinatorial class having the same dynamical portrait as $f$ can be represented by a twist of $f$ (Lemma \ref{Hurwitzlem}).

We present an algorithm to determine the combinatorial class of some twist $h_0$ of $f$.  Recall that the pure mapping class group of $(\sphere,P_f)$ is isomorphic to the free group on two generators.  Following \cite{BN06}, we extend the virtual endomorphism on the pure mapping class group and iterate. This defines a sequence of mapping classes $\{h_i\}_{i=0}^\infty$ where $h_i\circ f$ is combinatorially equivalent to $h_{i+1}\circ f$ for all $i\geq 0$.  
A set built from the nucleus of the self-similar action of the pure mapping class group on combinatorial classes contains the tails of all such sequences $\{h_i\}_{i=0}^\infty$.
Using our nucleus computations from the previous section, we show that for any map under consideration, the tail of a twisting sequence consists of a finite number of cycles of the extended virtual endomorphism together with at most two $\mathbb{Z}$-parameter families of fixed points. With a few exceptions, each of these cycles corresponds to a distinct combinatorial class, and we identify rational representatives (if they exist) using biset invariants such as the pullback function on curves discussed below. 

In $\S$\ref{sec:PullbackOnCurves} we use the self-similar calculations for these maps to determine the pullback of essential curves. Following the methods of \cite{TW} and \cite{Lod}, we extend the virtual endomorphism on the pure mapping class group and iterate (note that this extension is different from the one used in the previous section for solving the twisting problem). This extension allows us to compute the backward dynamics of homotopy classes of essential curves under the rational maps we are studying. Differences in these dynamics provide an invariant that we use to distinguish combinatorial classes in our enumeration. Additionally, we prove that each rational map in this category eventually pulls every curve back onto a finite set (Theorem \ref{thm:FGA}).

\section{M\"obius reducibility and maps on moduli space}
\label{sec:Moduli}

In this section we prove results for Thurston maps with four postcritical points, but arbitrary degree. The fundamental idea is that a rational Thurston map $g$ with fewer than four postcritical points can be postcomposed by a M\"obius transformation to produce a marked branched cover (a mild generalization of Thurston map) with exactly four marked points (Proposition \ref{prop:postcompOfRational}). We then give a non-dynamical sufficient condition for a Thurston map to arise as a composition in this way (Proposition \ref{prop:cleanImpliesReducible}). An important consequence of this decomposition is that the pullback map on Teichm\"uller space for this marked branched cover has an associated map on moduli space that is $g$ in appropriate coordinates (see Theorem \ref{Thm:mapOnModuliSpace}). We first define the relevant terminology following \cite{BCT}.

\subsection{Marked branched covers}

A \emph{marked cover} is a pair $(f,Y)$ where $f:\sphere\to\sphere$ is an orientation-preserving branched cover with degree $d\geq 2$ and $Y\subset\sphere$ is a finite set containing the postcritical set $P_f$ and satisfying $f(Y)\subset Y$. The pair $(f,Y)$ is said to be \emph{rational} if $f$ is rational. Every Thurston map $f$ is considered to be a marked cover with marked set $P_f$ unless otherwise stated. We now define some invariants for marked covers.

\begin{definition}[Dynamical and static portraits]
Let $(f,Y)$ be a marked cover.

The \emph{dynamical portrait} of $(f,Y)$ is the labeled directed graph whose vertices are points in $Z_f:=Y\cup C_f$.  A directed edge connects vertex $v$ to vertex $w$ if and only if $f(v)=w$. The label of such an edge is given by the local degree of $f$ at $v$.

The {\it static portrait} of $f$ is a labeled directed graph with directed edges between two disjoint sets: directed edges originate in vertices $Y$ and terminate in a disjoint copy of vertices $P_f$. A directed edge connects vertex $v$ to vertex $w$ if and only if $f(v)=w$. The label of such an edge is given by the local degree of $f$ at $v$.
\end{definition}

Intuitively, the static portrait of a marked cover is the dynamical portrait with the domain and range no longer identified.

We say that marked covers $(f,Y_f)$ and $(g,Y_g)$ have \emph{equivalent portraits} (dynamical or static) if and only if there is a bijection $h:Z_f\rightarrow Z_g$ so that $h\circ f=g\circ h$ on $Z_f$ and $h$ preserves edge labels (i.e. the local degree of $f$ at $z\in Z_f$ is identical to that of $g$ at $h(z)$). If Thurston maps $f$ and $g$ are combinatorially equivalent, they evidently have equivalent portraits.

In Section \ref{subsec:mapsOnModuli} we will show that the iteration on Teichm\"uller space associated to certain marked covers takes a specific form. We define the relevant terminology here.
Suppose $Y=\{y_0,y_1,y_2,y_3\}\subset\sphere$. Then the \emph{moduli space} of $\sphere$ with marked set $Y$ is given by
\[\mathcal{M}_Y:=\{\iota:Y\hookrightarrow\rs\}/\sim\]
where two injections $\iota_1,\iota_2:Y\hookrightarrow\rs$ are equivalent if and only if there exists a M\"obius transformation $M:\rs\to\rs$ so that $M\circ \iota_1=\iota_2$. Then \emph{Teichm\"uller space} of $\sphere$ with marked set $Y$ is given by 
\[\mathcal{T}_Y:=\{\text{homeomorphisms }\phi:\sphere\to\rs\}/\sim\]
where two homeomorphisms $\phi_1,\phi_2:\sphere\to\rs$ are equivalent if and only if there exists a M\"obius transformation $M:\rs\to\rs$ so that $M\circ \phi_1|_Y=\phi_2|_Y$ and $M\circ \phi_1$ is isotopic to $\phi_2$ rel $Y$. There is an obvious analytic projection $\pi:\mathcal{T}_Y\to\mathcal{M}_Y$ defined by $\pi([\phi])=[\phi|_Y]$.

The \emph{pullback map on Teichm\"uller space} $\sigma_{f,Y}:\mathcal{T}_Y\to\mathcal{T}_Y$ associated to a marked cover $(f,Y)$ is defined here (it was used to generalize Thurston's theorem to the setting of marked branched covers \cite[Theorem 2.1]{BCT}). Let $\phi$ represent a point $\tau\in\mathcal{T}_Y$. Pull back the standard complex structure on $\rs$ under the map $f\circ\phi:\sphere\to\rs$. Uniformization of this new complex structure defines a map $\tilde{\phi}:\sphere\to\rs$ that represents some point $\tilde{\tau}\in\mathcal{T}_Y$. The pullback map is defined by $\sigma_{f,Y}(\tau)=\tilde{\tau}$ and is shown in \cite{BCT} to be well-defined and analytic. It is known that if $f$ is a non-Euclidean rational map, the map $\sigma_{f,Y}$ has a unique attracting fixed point in $\mathcal{T}_Y$ \cite[Theorem 2.2]{BCT}.

\subsection{Static reducibility}

Many of the objects in this section are considered up to automorphism of $\rs$. We will normalize three marked points to be $0,1,$ and $\infty$ for convenience, and lose no generality in doing so. Let $A=\{0,1,\infty,a\}\subset\rs$ be a set of four points.

\begin{proposition}[Marked covers from M\"obius composition]\label{prop:postcompOfRational}
Let $g:\rs\to\rs$ be a rational map with $P_g\subset\{0,1,\infty\}$, and suppose that $g(A)\subset A$. If $M$ is a M\"obius transformation with $M(A)=A$, then $(M\circ g,A)$ is a rational marked cover.
\end{proposition}

\begin{proof}
Each critical value of $g$ is contained in $P_g$ which is a subset of $A$. Thus each critical value of $M\circ g$ is in $A$. Since $A$ is forward invariant under $M\circ g$, it follows that the postcritical set of $M\circ g$ is in $A$.
\end{proof}

This section develops the properties of marked covers that arise in this way.

\begin{definition}[M\"obius reducible]\label{defn_MobReducible} We say that a rational marked cover $(f,A)$ with $A=\{0,1,\infty,a\}$ is \emph{M\"obius reducible} if $f=M\circ g$ for some M\"obius transformation $M$ and some rational marked cover $(g,B)$ with $B\subset\{0,1,\infty\}$.
\end{definition}

In Proposition \ref{prop:cleanImpliesReducible} we will give a sufficient condition for a rational marked cover to be M\"obius reducible. As such it is a partial converse to Proposition \ref{prop:postcompOfRational}.

\begin{definition}[Static reducible]
\label{def:statics}
We say that a marked cover $(f,A)$ with $|A| = 4$ is \emph{static reducible} if its static portrait has a component consisting of a single directed edge between marked points and that edge has label 1.
\end{definition}

For concreteness, we reformulate this definition in coordinates. Let $(f,A)$ be  a marked branched cover with $A=\{0,1,\infty,a\}$. Then $(f,A)$ is \emph{static reducible} if up to relabeling, $f^{-1}(f(a))$ contains no critical point and no other marked point beside $a$.  In this case, we say that the point $a$ is {\it statically trivial}. As a notational convenience, we will always assume that the point $a$ is statically trivial if $(f,A)$ is statically reducible.


\begin{proposition}[Static implies M\"obius reducible]\label{prop:cleanImpliesReducible}
Let $(f,A)$ be a static reducible rational marked cover. Then $(f,A)$ is M\"obius reducible.
\end{proposition}

\begin{proof}
If $f(a)\neq a$, let $\rho$ be the permutation of $A$ that interchanges $a$ and $f(a)$ and consists of two disjoint two-cycles. If $f(a)=a$, let $\rho$ be the identity permutation of $A$. Then $M$ is taken to be the unique M\"obius transposition that induces $\rho$ on $A$. Let $g:=M\circ f$. Evidently $f=M\circ g$ since $M=M^{-1}$. The pair $(g,A)$ is immediately seen to be a marked cover, and $g(a)=a$ where $a$ is statically trivial. Taking $B:=A\setminus\{a\}$, it follows that $(g,B)$ is a marked cover.
  
\end{proof}

\subsection{Maps on moduli space}\label{subsec:mapsOnModuli}
Recall that $A=\{0,1,\infty,a\}\subset\rs$. We identify $\mathcal{M}_A$ and $\rs\setminus\{0,1,\infty\}$ via $\iota\mapsto \iota(a)$, where $\iota$ is normalized so that it maps $0,1,\infty$ to $0,1,\infty$ respectively.

\begin{theorem}[Map on moduli space]\label{Thm:mapOnModuliSpace}
Let $(f,A)$ be a rational marked branched cover that is M\"obius reducible. Then the following diagram commutes (with $g$ and $a$ as in Definition \ref{defn_MobReducible}):

\centerline{ \xymatrix{\mathcal{T}_{A} \ar[r]^{\sigma_{f,A}} \ar[d]_\pi &{\mathcal{T}_A} \ar[d]^{\pi} \\
\mathcal{M}_A  &{\mathcal{M}_A}\ar[l]^{g}}}
\noindent Moreover, the unique fixed point of $\sigma_{f,A}$ is an element of $\pi^{-1}(a)$.
\end{theorem}

\begin{remark}
It is a theorem of \cite{K} that the pullback on Teichm\"uller space associated to an arbitrary Thurston map covers a \emph{correspondence} on moduli space.  It is thus a special feature of our restricted class of maps that the pullback map always covers a \emph{map} on moduli space. In this case, the unique attracting fixed point of $\sigma_f$ in Teichm\"uller space evidently projects to a repelling fixed point of $g$.
\end{remark}

\begin{proof}
Since $M$ is an automorphism of $(\rs,A)$, it follows that $\sigma_{f,A}=\sigma_{M\circ g,A}=\sigma_{g,A}$. Point pushing defines a bijection between the space $\mathcal{T}_A$ and the set of homotopy (rel endpoint) classes of arcs in $\mathbb{C}\setminus\{0,1\}$ emanating from $a$ (\cite[Theorem 2.4]{Lod}). Since $a$ is not critical, there is a unique lift of the path corresponding to $\tau$ under $g$ based at $a$. The homotopy class of this path corresponds to $\sigma_{g,A}(\tau)$ by point pushing. Since the projection of $\sigma_{g,A}$ to $\mathcal{M}_A$ coincides with the branch of $g^{-1}$ that fixes $a$, the conclusion follows from the identity principle for holomorphic maps.

\end{proof}

Notice that the result does not require any restrictions on the degree of the cover $f$. It does however use the fact that the marked set is small in order to guarantee that $M(A)=A$.

\section{Specialization to the quadratic case}
\label{sec:RamPort}

Denote by $Q(i)$ the set of quadratic Thurston maps with exactly $i$ postcritical points. Clearly $Q(1)$ is empty.  Every element of $Q(2)\cup Q(3)$ is combinatorially equivalent to a rational map, and an exhaustive list of rational representatives is given in Table \ref{tbl:Quad3}. 
Our main focus will be the set of non-Euclidean quadratic Thurston maps with four postcritical points, denoted $Q(4)^{*}$. We will show (Lemma \ref{lem:cleanQuadratics}) that $Q(4)^*$ maps are static reducible, allowing us to apply the theory of the previous section.
For more on the Euclidean case, see \cite{M03} and \cite{PET}.

\renewcommand{\arraystretch}{2.4}
\begin{table}
\caption{All quadratic rational maps with less than four postcritical points}
\begin{tabular}{c|l|l}

\hline

$f\in Q(2)\cup Q(3)$ & dynamical portrait \qquad\qquad\qquad\qquad\qquad  & fixed points \\\hline\hline


$(1-2z)^2$& $\xymatrix{
\frac{1}{2}\ar@{=>}[r] & 0 \ar[r]   & 1 \ar@(ur,dr) & \infty\ar@{=>}@(ur,dr)[]}$ &  $\frac{1}{4},1,\infty$ \\
\hline

$\dfrac{1}{1-(1-2z)^2}$& $\xymatrix{
\frac{1}{2}\ar@{=>}[r] & 1 \ar[r]   & \infty \ar@{=>}[r] & 0 \ar@/^/[l] }$ &  $\approx -.4196, .7098\pm .3031i$\\
\hline

$1-\dfrac{1}{(1-2z)^2}$& $\xymatrix{
\frac{1}{2}\ar@{=>}[r] & \infty \ar@{=>}[r]   & 1 \ar[r]  & 0 \ar@(ur,dr)}$ &  $0,1\pm\frac{1}{2}i$ \\
\hline\hline

$z^2$& $\xymatrix{
0 \ar@{=>}@(ur,dr)[]    & \infty\ar@{=>}@(ur,dr)[]}$ &  $ 0,1,\infty$ \\
\hline

$1-z^2$& $\xymatrix{
0 \ar@{=>}[r] & 1 \ar@/^/[l]   & \infty\ar@{=>}@(ur,dr)[]}$ &  $ \frac{1}{2}(-1\pm\sqrt{5}),\infty$ \\
\hline

$\dfrac{1}{z^2}$& $\xymatrix{
0 \ar@/^/@{=>}[r]    & \infty \ar@/^/@{=>}[l]}$ &  $ 1,\frac{1}{2}(-1\pm\sqrt{3}i)$ \\
\hline

$\dfrac{1}{1-z^2}$& $\xymatrix{\infty \ar@{=>}[r] & 0 \ar@{=>}[r] & 1 \ar@/^/[ll] }$ &  $\approx -1.347, 0.6624\pm .5623i$ \\ \hline

\end{tabular}

\caption*{Each rational map in $Q(2)\cup Q(3)$ is listed in the first column (up to automorphism) together with its dynamical portraits and fixed points. The first three maps have one critical postcritical point, and the last four maps have two.}\label{tbl:Quad3}
\end{table}

\subsection{Portrait enumeration}


Enumeration of static and dynamical portraits is a sensible first step in the enumeration of combinatorial classes. 

There are exactly two static portraits realized by maps in $Q(4)^*$. These are shown in Figure \ref{fig:statics}. The left-hand static portrait is realized by the first nine $Q(4)^*$ maps in Table \ref{tbl:Hurwitz1} and the right-hand static portrait is realized by the last three $Q(4)^*$ maps in Table \ref{tbl:Hurwitz1}.

\begin{figure}[h]
$$\xymatrix{\ast \ar@{=>}[d] &\bullet \ar@{=>}[d] & \bullet \ar[d] & {\bullet} \ar[d] & \bullet \ar[dl] \\
\bullet &\bullet &\bullet &\bullet}
\qquad \qquad\qquad\qquad \qquad
\xymatrix{\bullet \ar@{=>}[d] &\bullet \ar@{=>}[d] & \bullet \ar[d] & {\bullet} \ar[d] \\
\bullet &\bullet &\bullet &\bullet}$$
\caption{The two possible static portraits for Thurston maps in $Q(4)^*$. The marked (i.e. postcritical) points are labeled by $\bullet$ and the unmarked (i.e. nonpostcritical) critical points are labeled by $\ast$. The left portrait is for those $Q(4)^*$ maps with one critical marked point, and the right portrait is for those with two critical marked points.}
\label{fig:statics}
\end{figure}

One might enumerate the dynamical portraits for maps in $Q(4)^*$ by brute force using the fact that each map has exactly two critical points and each point has two preimages counting multiplicity. This method generates a list of 13 candidates, but there is no guarantee that each dynamical portrait is actually realized by a Thurston map. One way to prove realizibility of a dynamical portrait is to explicitly construct the Thurston map, but this is cumbersome and there are no algorithms in general.

We prefer a more conceptual approach to enumerating portraits. The main result of this section is Proposition \ref{prop:RationalQ4Comps} which asserts the M\"obius reducibility of all rational maps in $Q(4)^*$. One application of this result is the enumeration in Table \ref{tbl:Hurwitz1} of all $Q(4)^*$ dynamical portrait equivalence classes (realized by rational maps) simply by M\"obius postcomposing maps in $Q(2)\cup Q(3)$. Realizability of these dynamical portraits is then automatic and Theorem \ref{Thm:mapOnModuliSpace} enables us to efficiently find maps on moduli space. There is only one (equivalence class of) portrait in $Q(4)^*$ that is not realized by a rational map 
\[\xymatrix{0\ar@{=>}@/^/[r] & 1 \ar@/^/[l]   &\infty\ar@{=>}@/^/[r] &\bullet\ar@/^/[l]}\] but is realized for example as the formal mating of $z^2-1$ with itself. Thus all 13 dynamical portraits are realized by quadratic Thurston maps.

Three portraits in $Q(4)^*$ are realized by polynomials, and though the corresponding Thurston classes are enumerated in \cite{BN06}, we include them for completeness.

\subsection{Quadratics are M\"obius reducible}

\begin{lemma}[$Q(4)^*$ maps are static reducible]\label{lem:cleanQuadratics}
For each $f\in Q(4)^*$, the number of statically trivial postcritical points is equal to $|P_f\cap C_f|\geq 1$.
\end{lemma}
\begin{proof}
Since $|P_f|=4$ and $f$ is quadratic, the set $f^{-1}(P_f)$ contains 8 points counting multiplicity. Clearly $|P_f\cap C_f|+4$ of the preimages are contained in the postcritical set and $2(2-|P_f\cap C_f|)$ preimages lie in critical points outside of the postcritical set. Thus there are 
\[(|P_f\cap C_f|+4)+2(2-|P_f\cap C_f|)\]
preimages that lie in $P_f\cup C_f$, and so there are $|P_f\cap C_f|$ distinct points in the complement. Let $z_0$ be such a point. Then the unique preimage of $f(z_0)$ in $P_f$ is statically trivial.
\end{proof}
This lemma confirms a visual inspection of Figure \ref{fig:statics}.

For some point  $\bullet\in\rs\setminus\{0,1,\infty\}$, denote by $M_{\bullet,0}, M_{\bullet,1},$ and $M_{\bullet,\infty}$ respectively the M\"obius transformations inducing the following permutations: \[ (\bullet\; 0)(1\;\infty),\qquad (\bullet\; 1)(0\;\infty),\qquad (\bullet \;\infty)(0\;1).\]

Recall that the maps in $Q(2)\cup Q(3)$ are already enumerated Table \ref{tbl:Quad3}. The following statement is the basis of our enumeration of rational $Q(4)^*$ maps given in Table \ref{tbl:Hurwitz1}.

\begin{proposition}[Rational $Q(4)^*$ maps are compositions]\label{prop:RationalQ4Comps} Let $f\in Q(4)^*$ be a rational map; fix a normalization so that $P_f=\{0,1,\infty,\bullet\}$ where $\bullet$ is statically trivial. Then there exists $g\in Q(2)\cup Q(3)$ so that $f=M_{\bullet, f(\bullet)}\circ g$.
\end{proposition}

\begin{proof}
Since $\bullet$ is statically trivial, Proposition \ref{prop:cleanImpliesReducible} implies that $f$ may be expressed as the desired composition. 
\end{proof}

The following is a direct consequence of this proposition and Theorem \ref{Thm:mapOnModuliSpace}.

\begin{corollary}
The map $g$ from the M\"obius reduction in Proposition \ref{prop:RationalQ4Comps} is the map on moduli space associated to $\sigma_f:\mathcal{T}_f\to\mathcal{T}_f$ in the sense of Theorem \ref{Thm:mapOnModuliSpace}, and $g(\bullet)=\bullet$.
\end{corollary}

\begin{table}
\caption{All non-Euclidean quadratic rational maps with four postcritical points}
\begin{tabular}{c|c|c|l}
  
\hline
$f\in Q(4)^*$  & $\bullet$ & $g$ &  dynamical portrait of $f$ \\\hline\hline


$M_{\bullet,0}\circ g$ & $ \frac{1}{4}$&  & $\xymatrix{
\frac{1}{2}\ar@{=>}[r] &\bullet\ar[r] & 0\ar[r] & \infty \ar@{=>}@/^/[r] & 1 \ar@/^/[l]}$   \\ 
  \cline{1-2}\cline{4-4}

$M_{\bullet,1}\circ g$ & $ \frac{1}{4}$ & $(1-2z)^2$  & $\xymatrix{\frac{1}{2}\ar@{=>}[r] &\infty\ar@{=>}[r] & 0 \ar[r] &\bullet\ar@/^/[r] & 1 \ar@/^/[l]}$   \\ \cline{1-2}\cline{4-4}

 $M_{\bullet,\infty}\circ g$ & $ \frac{1}{4}$& &  $\xymatrix{\frac{1}{2}\ar@{=>}[r] & 1 \ar[r] & 0 \ar@(ur,dr)[] &\infty\ar@{=>}@/^/[r] & \bullet \ar@/^/[l]}$    \\ \hline


$M_{\bullet,1}\circ g$ & $ \approx -.4196, .7098\pm .3031i$  &  & $\xymatrix{\frac{1}{2}\ar@{=>}[r] &\bullet\ar[r] & 1 \ar[r] & 0 \ar@(ur,dr)[] &\infty \ar@{=>}@(ur,dr)[]}$  \\ \cline{1-2}\cline{4-4}
  
$M_{\bullet,\infty}\circ g$ &  $ \approx -.4196, .7098\pm .3031i$& $\dfrac{1}{1-(1-2z)^2}$  & $\xymatrix{
\frac{1}{2}\ar@{=>}[r] & 0 \ar[r] &\bullet\ar[r] &\infty \ar@{=>}[r] & 1 \ar@/^/[ll]}$  \\\cline{1-2}\cline{4-4}

$M_{\bullet,0}\circ g$ &  $ \approx -.4196, .7098\pm .3031i$ &  & $\xymatrix{
 \frac{1}{2} \ar@{=>}[r] &\infty\ar@{=>}[r] &\bullet\ar[r] & 0 \ar[r] & 1 \ar@(ur,dr)[] }$ \\ 
  \hline


$M_{\bullet,\infty}\circ g$&   $  1\pm \frac{1}{2}i$ &  & $\xymatrix{\frac{1}{2}\ar@{=>}[r] &\bullet\ar[r] &\infty\ar@{=>}[r] & 0 \ar[r] & 1 \ar@(ur,dr)[]}$  \\   \cline{1-2}\cline{4-4}

$M_{\bullet,0}\circ g$ &  $ 1\pm \frac{1}{2}i$ & $1-\dfrac{1}{(1-2z)^2}$  & $\xymatrix{\frac{1}{2}\ar@{=>}[r] & 1\ar[r] & \bullet \ar@/^/[r] & 0 \ar@/^/[l] &\infty\ar@{=>}@(ur,dr)[]}$  \\ \cline{1-2}\cline{4-4}

$M_{\bullet,1}\circ g$ & $ 1\pm \frac{1}{2}i$ &   & $\xymatrix{\frac{1}{2}\ar@{=>}[r] & 0 \ar[r] &\infty \ar@{=>}[r] &\bullet\ar[r] & 1 \ar@/^/[ll]}$     \\  \hline\hline

  
$M_{\bullet,1}\circ g$ &  $ \frac{1}{2}(-1\pm\sqrt{5})$ & $1-z^2$ & $\xymatrix{
\infty\ar@{=>}[r] & 0 \ar@{=>}[r]   &\bullet\ar[r] & 1 \ar@/^/[lll]}$ \\
\hline


$M_{\bullet,0}\circ g$ &  $\frac{1}{2}(-1\pm\sqrt{3}i)$ & $\frac{1}{z^2}$ & $\xymatrix{
0 \ar@{=>}[r] & 1 \ar[r] &\infty \ar@{=>}[r] &\bullet \ar@/^/[lll]}$  \\
\hline


$M_{\bullet,1}\circ g$ &  $\approx -1.347, 0.6624\pm .5623i$ & $\frac{1}{1-z^2}$ & $\xymatrix{0 \ar@{=>}[r] &\bullet\ar[r] & 1 \ar@/^/[ll] &\infty\ar@{=>}@(ur,dr)[]}$  \\ \hline

\end{tabular}
\caption*{Each rational map $f\in Q(4)^*$ is listed in the first column (up to M\"obius conjugacy) as a composition of a simpler map $g\in Q(2)\cup Q(3)$ and a M\"obius transformation depending on $\bullet\in\mathbb{C}$ (guaranteed by Proposition \ref{prop:RationalQ4Comps}). The first nine maps satisfy $|P_f\cap C_f|=1$ and the last three maps satisfy $|P_f\cap C_f|=2$. Double arrows indicate nontrivial ramification.}\label{tbl:Hurwitz1}
\end{table}

\begin{remark}[Group action on portraits]
Dynamical portraits in Table \ref{tbl:Hurwitz1} admit symmetries that are typically not induced by conformal automorphisms. For example, the permutation $( 0 \;1\; \infty)$ acts by postcomposition on the first 9 portraits as follows: the $i$-th portrait is taken to the $(i+3)$-th portrait (mod 9).
\end{remark}


\subsection{Hurwitz equivalence}

In the quadratic case, a classical result allows us to use the solution to the twisting problem to enumerate combinatorial equivalence classes of Thurston maps.

\begin{definition}[Hurwitz equivalences]
\label{Hurwitzdef}
Two Thurston maps $f$ and $g$ are said to be (classically) \emph{Hurwitz equivalent} if and only if there exist orientation-preserving homeomorphisms $h_0, h_1:\sphere\to\sphere$ so that
\[h_0\circ f= g\circ h_1\]
We say that $f$ and $g$ are {\it pure Hurwitz equivalent} if $h_0$ and $h_1$ agree on $P_f$, and their shared image is in $P_g$.
\end{definition}

\begin{lemma}[Pure Hurwitz classes correspond to dynamical portraits]
\label{Hurwitzlem}
For $f,g \in Q(4)^*$, $f$ and $g$ are pure Hurwitz equivalent if and only if they have equivalent dynamical portraits.
\end{lemma}

\begin{proof}
The dynamical portrait is always an invariant for the pure Hurwitz class of a Thurston map.
We need to show that for $Q(4)^*$ maps, every pair of maps that have equivalent dynamical portraits are also pure Hurwitz equivalent.

By the uniqueness theorem for simple branched covers of $\sphere$ (see e.g. \cite[Theorem 3.4]{BE}), any two maps in $Q(4)^*$ are classically Hurwitz equivalent. If two such maps have equivalent dynamical portraits, then the maps $h_0, h_1$ in Definition \ref{Hurwitzdef} must in fact satisfy the definition for pure Hurwitz equivalence.

\end{proof}

\begin{remark}
\label{impurerem}
One can also define the impure Hurwitz class of a Thurston map by allowing $h_0$ and $h_1$ to disagree on $P_f$, but still send $P_f$ into $P_g$. In our $Q(4)^*$ setting, the impure Hurwitz class corresponds to static portrait equivalence.
\end{remark}
%
%





\section{Background on self-similar groups}
\label{sec:ssgrps}

A self-similar group is understood through its action by isomorphisms on a tree.
Let $X = \{1, 2, \dots, d\}$ be a finite alphabet.
We identify the set $X^*$ of finite words in $X$ with the vertices of an infinite, $d+1$-regular, rooted tree.
We will think of the tree as ``growing'' up from the root, so the children of each vertex will be ``above'' it.
Note that an isomorphism of this tree will preserve the ``levels'' of the tree
(i.e. the length of words in $X^*$).
Further, the isomorphism will take the subtree above a vertex to the subtree above the image of that vertex (i.e. words with a given suffix will be taken to words that end with the image of that suffix).
Since the subtree above any vertex of an infinite, $d+1$-regular, rooted tree is canonically isomorphic to the full tree,
we can identify the subtrees above the pre-image and image vertices with the full tree.
Thus, the \emph{restriction} of the isomorphism to the subtree above a particular vertex can be viewed as an element of the isomorphism group of the full tree.

A group action by isomorphisms on an infinite, $d+1$-regular, rooted tree is \emph{self-similar} 
if it is closed under restrictions.
That is, the restriction of any element of the group to any vertex will be an element of the group.

An element of a self-similar group (i.e. a group admitting a self-similar action) can be described by
giving the action of the element on the first level of the tree and the restrictions of the element to all vertices on that first level.
This notation is called the \emph{wreath recursion} of the element.
For example, for an element $h$ of a group with a self-similar action on a binary rooted tree, we would write
$$h = \left< h|_1, h|_2\right>\pi_h$$
where $h|_1$ represents the restriction of $h$ to the vertex identified with $1$, $h|_2$ the restriction of $h$ to the vertex $2$, and $\pi_h$ is the permutation on $\{1,2\}$ induced by $h$.
We omit $\pi_h$ if it is trivial, and we denote the non-trivial permutation on $\{1,2\}$ by $\sigma$.

To multiply wreath recursions, use the following rule:
$$\left< g|_1, g|_2, \dots , g|_d\right>\pi \cdot \left<h|_1, h|_2, \dots , h|_d\right>\tau = \left< g|_1h|_{\pi(1)}, g|_2h|_{\pi(2)}, \dots , g|_dh|_{\pi(d)}\right>\pi\tau$$
See section 1.4 of \cite{N} for a discussion of the connection between self-similar groups and wreath products.

A \emph{virtual endomorphism} of a group $G$ is a homomorphism from a finite-index subgroup of $G$ to $G$.
Given a self-similar group $G$ and a letter $x$ of the tree with vertex set $X^*$ that it acts on, 
the restriction map at $x$ is a natural virtual endomorphism from the stabilizer of $x$ to $G$.
So, for example, suppose $G$ acts on an infinite, binary, rooted tree whose verices we identify with $\{1,2\}^*$.
Then any element of $G$ can be written as $h = \langle h|_1, h|_2\rangle\pi_h$.
The virtual endomorphism associated with the first coordinate has domain equal the set of elements
$h\in G$ such that $\pi_h$ is trivial and such an element is mapped to $h|_1$.

Given a virtual endomorphism $\phi$ of a group $G$, 
one can construct a self-similar action of $G$ on a tree with vertex set $X^*$, 
where $X$ is the set of cosets of the domain of $\phi$.
For details see section 2.5 of \cite{N}.

A self-similar action of $G$ on a tree $X^*$
is said to be \emph{contracting} if there exists a finite set $\mathcal{N} \subset G$ such that
for every $h\in G$ there exists an $n_h\in\N$ such that 
$h|_v \in \mathcal{N}$ for all words $v\in X^*$ with length at least $n_h$.
The minimal such set $\mathcal{N}$ is unique and is called the \emph{nucleus} of the self-similar action.
The self-similar action is contracting if and only if the \emph{contracting coefficient}
$$\rho = \limsup_{n\to\infty}\sqrt[n]{\limsup_{l(h)\to\infty} \max_{v\in X^n} \frac{l(h|_v)}{l(h)}}$$
is strictly less than 1.
One should think of a contracting self-similar action as one where the restrictions of a group element get
simpler (i.e. shorter in word length) as you restrict further up the tree.
At some height of the tree, all of these restrictions will be in a finite set (the nucleus).

It is important to note that the nucleus of a contracting self-similar action is defined to be a finite set.
In this work, we will be making use of a set with the same definition as the nucleus
but we will not require the set to be finite.
We will refer to this set as the \emph{(possibly infinite) nucleus}.

We will briefly describe another formulation of the self-similar theory.
Given a self-similar action of a group $G$ on a tree $X^*$, we can define a \emph{$G$-biset} 
(sometimes called a \emph{bimodule}) as follows:
the set itself is $G\times X$, with elements written as formal products $h\cdot x$.
There is a left action of $G$ by left multiplication, and a right action of $G$ by
$$(g\cdot x) \cdot h = gh|_x \cdot h(x)$$

If the associated self-similar action is contracting, we say that the biset is \emph{hyperbolic}.
If the action is not contracting, but the action by the faithful quotient is, 
then we say that the biset is \emph{sub-hyperbolic}.
In other words, in the sub-hyperbolic case the infinite nucleus contains only finitely-many distinct actions.
All of the bisets for the actions we discuss in this paper are either hyperbolic (when the nucleus is finite)
or sub-hyperbolic (when the nucleus is infinite).
See Table \ref{tbl:Nuclei} and Theorem \ref{thm:subhyp} for details.

In \cite{BN06}, Bartholdi and Nekrashevych use this self-similar machinery to solve the twisting problem for all polynomials in $Q(4)^*$. Specifically, they take a virtual endomorphism $\phi$ of the pure mapping class group $G$
and extend it to a map (not a homomorphism) $\bar{\phi}:G\to G$
such that for $T\in G$, $T\circ f$ is equivalent to $\bar{\phi}(T)\circ f$.
Thus, the question of equivalence class of the twists can be reduced to those twists that are periodic under $\bar{\phi}$.
Since the $G$-biset of the self-similar action associated with $\phi$ is sub-hyperbolic in these cases,
this set is relatively straightforward to analyze.

\section{Wreath recursions for maps on moduli space}\label{sec:wreath}

We first represent pure mapping classes of $(\mathbb{S}^2,P_f)$ as elements of the fundamental group of moduli space. This induces an isomorphism between the mapping class biset of $f$ and the fundamental group biset of the map on moduli space at an appropriate basepoint according to \cite{BN06} and \cite[Theorem 2.6]{Lod}. If $f$ is rational, this basepoint is the fixed point associated with $f$. Wreath recursions for all maps on moduli space at all fixed points are computed, and their nuclei are computed in a generalized sense.  These nuclei will be vital for both our solution to the twisting problem and our computation of the global dynamics of the pullback on curves.

\subsection{Twisting in terms of fundamental group}
The point-pushing isomorphism is used here to write the left action of the pure mapping class group as a right action of the fundamental group of moduli space.  Let $\mathcal{M}_f$ denote moduli space (identified with $\mathbb{C}\setminus\{0,1\}$ as before) and suppose $z_0$ is the fixed point of the map on moduli space $g$ corresponding to some map $f\in Q(4)^*$. Let $\gamma:[0,1]\rightarrow\mathcal{M}_f$ be a loop with $\gamma(0)=\gamma(1)=z_0$.  Then $\gamma$ extends to a motion $\tilde{\gamma}:\mathcal{M}_f\times[0,1]\rightarrow\mathcal{M}_f$ that ``pushes" $z_0$ along $\gamma$; in other words $\tilde{\gamma}(\cdot,t)$ is a homeomorphism and $\tilde{\gamma}(z_0,t)=\gamma(t)$ for all $t$. Then to $[\gamma]\in\pi_1(\mathcal{M}_f,z_0)$ we associate the isotopy class of $T_{\gamma}:=\tilde{\gamma}(\cdot,1)$. This is a pure mapping class of $(\mathbb{S}^2, P_f)$ under the identification of $\rs$ with $\mathbb{S}^2$ via the stereographic projection. We use the notation $f\cdot \gamma:=T_{\gamma}\circ f$, following \cite[\S 8]{Lod} and \cite{BN06}.

\subsection{Generators and connecting paths}
We need to compute a wreath recursion for each map on moduli space based at each fixed point outside of $\{0,1,\infty\}$ (suitably interpreted for $g(z)=z^2$, which has none). Such wreath recursions depend on non-canonical choices of generators and connecting paths; in the interest of streamlining our presentation we fix generators $\hat{\alpha},\hat{\beta}$ in Figure \ref{fig:generators} and then use the M\"obius action to produce generators for all other maps on moduli space (labelled $\alpha$ and $\beta$).  Connecting paths will be chosen so as to simplify the wreath recursion presentations, and in particular the first coordinate of every active element will be trivial (i.e. $\alpha|_1 =1$). This fact will simplify the computations in the proofs of Lemmas \ref{NaNlem}, \ref{phihatdef}, and \ref{phihatNlem}. All wreath recursions and their nuclei are recorded in Table \ref{tbl:Nuclei}.

\begin{figure}[h]
\centerline{\includegraphics[width=70mm]{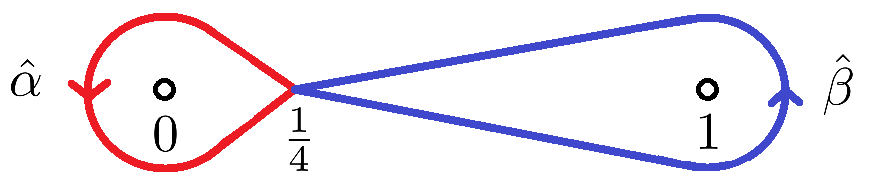}}
\caption{Generators $\hat{\alpha}$ and $\hat{\beta}$ of $\pi_1(\mathbb{C}\setminus\{0,1\},1/4)$}
\label{fig:generators}
\end{figure}

\subsubsection{One critical postcritical point}
 The map $g_1(z)=(1-2z)^2$ has exactly one non-postcritical fixed point, namely $1/4$.  The first three maps on moduli space that arise in Table \ref{tbl:Hurwitz1}  have the form $M\circ g_1$ where $M$ is the unique M\"obius transformation inducing the permutations $id, (0\; 1\; \infty)$ and $(0\; \infty\; 1)$ respectively.

\begin{figure}[h]
\centerline{\includegraphics[width=70mm]{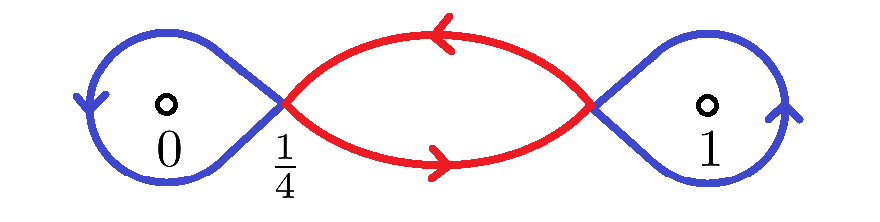}}
\caption{Lifts of generators from Figure \ref{fig:generators} under $g_1$}
\label{fig:LiftHurwitz1}
\end{figure}

Now let $z_0$ be some fixed point of $M\circ g_1$ outside $\{0,1,\infty\}$.  If $z_0$ can be connected to $M(1/4)$ by a line segment in $\mathbb{C}$ that doesn't pass through $0$ or $1$, let $\ell$ be this line segment (clearly such line segments exist when $z_0\notin \mathbb{R}$).  Otherwise, fix a path in $\mathbb{R}\cup\{\infty\}$ connecting $z_0$ to $M(1/4)$ that passes through exactly one point in $\{0,1,\infty\}$ which we denote $p$; then $\ell$ is taken to be this path perturbed to the left of $p$. In either case, we fix $\alpha:=\ell\cdot M(\hat{\alpha})\cdot\bar{\ell}$ and $\beta:=\ell\cdot M(\hat{\beta})\cdot\bar{\ell}$ as the generators of $\pi_1(\mathbb{C}\setminus \{0,1\}, z_0)$, where $\bar{\ell}$ denotes the reverse of $\ell$.

For the wreath recursion computation, the label $1$ is assigned to the fixed point $z_0$ and the label $2$ is assigned to the other preimage of $z_0$ under $M\circ g_1$.  The connecting paths for $M\circ g_1$ are taken to be the constant path at $z_0$ and the unique lift of $\alpha$ under $M\circ g_1$ based at $z_0$. 

\subsubsection{Two critical postcritical points}

The last three maps on moduli space in Table \ref{tbl:Hurwitz1} have the form $M\circ g_2$ where $M$ is the M\"obius transformation inducing the permutations $(0\; 1)$, $(0 \;\infty)$, and $(0\; 1\; \infty)$ respectively.  Each of these three maps has a fixed point outside $\{0,1,\infty\}$. The construction of generators $\alpha,\beta$ and wreath recursions is carried out as in the case of one critical postcritical point (i.e. apply the M\"obius action to $\hat{\alpha}$ and $\hat{\beta}$ and choose $\ell$ as before to compute $\alpha,\beta$ and the wreath recursion).

We must separately construct generators and connecting paths for $g_2(z)=z^2$ because of its exceptional nature (specifically $g_2$ does not have any fixed points outside of $\{0,1,\infty\}$). For lack of a natural basepoint, we simply fix $1/4$ to be the basepoint of the fundamental group and let $\alpha:=\hat{\alpha}$ and $\beta:=\hat{\beta}$ be the generators. The connecting paths for $g_2$ are chosen as follows: $\ell_{1}$ is the path in $\mathbb{R}$ connecting $1/4$ to $1/2$ and $\ell_{2}$ is the path connecting $1/4$ to $-1/2$ along a semi-circular arc in the upper half-plane.

\begin{figure}[h]
\centerline{\includegraphics[width=70mm]{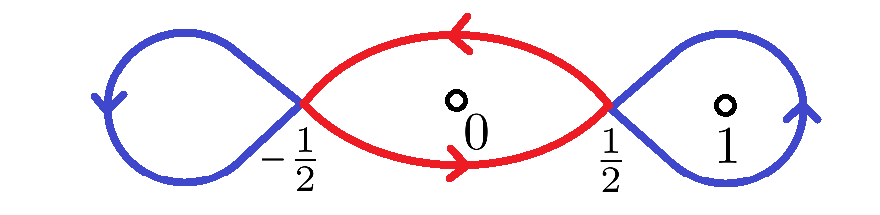}}
\caption{Lifts of generators from Figure \ref{fig:generators} under $g_2$}
\label{fig:LiftHurwitz2}
\end{figure}

\subsection{Nuclei}
Having determined the wreath recursions, 
we compute each (possibly infinite) nucleus using a technique described in Lemma 2.11.2 in \cite{N}.
Specifically, we create a candidate nucleus $N$ 
by closing a generating set under the operations of restriction and inversion (such a set is {\it state-closed}). 
Usually we use $\{1, \alpha, \beta\}$ for this set, but sometimes $\beta$ will not be in the nucleus.
In this case, we use $\{1, \alpha, \gamma\}$ where $\gamma = \beta^{-1}\alpha^{-1}$ or $\{1,\alpha,\delta\}$ where $\delta = \alpha^{-1}\beta^{-1}$.

We then check every element of $N^2$ to determine if there is a number $n$ such that
after \emph{any} $n$ restrictions the result is in $N$.
If this property holds for all elements of $N^2$, then $N$ contains our nucleus, 
since we can use $N$ as a generating set and produce contraction in word length.
If this property does not hold, there will be some cycles of elements of $N^2$ under restrictions.
We create a new candidate nucleus by including these cycles in $N$ and we repeat the process.

For example, consider the wreath recursion $\alpha = \langle 1, \gamma\rangle\sigma$, $\beta = \langle 1, \beta\rangle$, which arises from the $g$-map $\frac{1}{z^2}$ with fixed point $\frac{1}{2}(-1-\sqrt{3}i)$. 
Set $N = \{1, \alpha, \beta^n, \gamma\ | \ n\in\Z\}^{\pm1}$, since $\alpha$ restricts to $\gamma$ and all powers of $\beta$ will self-restrict.
When checking products in the set $N^2$, we see that some of them do not restrict back into $N$.
Since $N$ is state-closed, all of the restrictions of $N^2$ do fall in $N^2$, however.
The restrictions of $N^2$ that do {\it not} fall in $N$ are displayed below
(we do not display the symmetric relations on the inverses).
$$\alpha\gamma^{-1} \to \gamma\beta \to \gamma^{-1}\beta \qquad \gamma^{-2}\to\gamma\beta \qquad \beta^n\alpha \to \beta^n\gamma \qquad \gamma\beta^n\to \alpha\beta^{n+1}$$
Since the longest path along these restrictions has length two, 
we can see that every element of $N^2$ will restrict into $N$ after three restrictions.
Thus $N$ is our (infinite) nucleus.

We present the results of these computations in Table \ref{tbl:Nuclei}. 
Again, we point out that we are allowing our ``nucleus'' to be infinite, unlike in the standard literature.

\begin{landscape}
\begin{table}
\begin{tabular}{c|cll}

$g$-map & Fixed point & Wreath recursion  & (Possibly infinite) ``Nucleus'' \\\hline\hline


 \hline $(1-2z)^2$ & $\frac{1}{4}$ & $\alpha = \left<1,1\right>\sigma, \quad \beta=\left<\alpha,\beta\right>$ & $\{1, \alpha^n, \beta^n \ |\ n\in \Z \}$\\ \cline{2-4}\hline

& -.4196 &  $\alpha = \left<1,1\right>\sigma, \quad \beta=\left<\delta,\beta\alpha\beta^{-1} \right>$ & $\{1, \alpha, \beta, \delta, \beta\alpha\beta^{-1}, \delta^2, \alpha^{-1}\beta^{-2}, \beta\delta\beta^{-1}, \beta\alpha^2\beta\alpha, \delta^{-1}\beta\delta, \delta^{-1}\alpha\beta\delta\}^{\pm1}$ \\ \cline{2-4}
  
$\dfrac{1}{1-(1-2z)^2}$& $.7098+.3031i$  &   $\alpha = \left<1,1\right>\sigma, \quad \beta=\left<\alpha,\gamma \right>$ & $\{1, \alpha, \gamma\}^{\pm1}$\\\cline{2-4}

& $.7098-.3031i$  &   $\alpha = \left<1,1\right>\sigma, \quad \beta=\left<\alpha,\delta \right>$   & $\{1, \alpha, \delta\}^{\pm1}$ \\ 
  \hline


\multirow{2}{*}{$1-\dfrac{1}{(1-2z)^2}$}  & $1+\frac{i}{2}$ &   $\alpha = \left<1,1\right>\sigma, \quad \beta=\left<\gamma,\beta \right>$ & $\{1, \alpha^n, \beta^n, \gamma^n, \beta^{-1}\alpha^n\beta, \alpha^n\beta, \alpha\beta^2\ | \ n\in \Z\}^{\pm 1}$ \\   \cline{2-4}

 & $1-\frac{i}{2}$  & $\alpha = \left<1,1\right>\sigma, \quad \beta=\left<\delta,\beta \right>$ & $\{1, \alpha^n, \beta^n, \delta^n, \beta\alpha^n\beta^{-1}, \beta\alpha^n, \beta^2\alpha\ | \ n\in \Z\}^{\pm 1}$\\ \cline{2-4}\hline


\hline\hline $z^2$ & N/A & $\alpha = \left<1,\alpha\right>\sigma, \quad \beta=\left<\beta,1\right>$ & $\{1, \alpha, \beta^n, \alpha\beta^n, \alpha\beta^n\alpha^{-1}\ |\ n\in\Z\}^{\pm1}$\\ \hline


\multirow{2}{*}{$1-z^2$}  & $\frac{1}{2}(-1+\sqrt{5})$ &   $\alpha = \left<1,\beta \right>\sigma, \quad \beta=\left<\alpha,1 \right>$ & $\{1, \alpha, \beta, \alpha\beta^{-1}\}^{\pm1}$ \\   \cline{2-4}

 & $\frac{1}{2}(-1-\sqrt{5})$  & $\alpha = \left<1,\beta\right>\sigma, \quad \beta=\left<1,\beta\alpha\beta^{-1} \right>$ & $\{1, \alpha, \beta, \delta, \beta\alpha\beta^{-1}\}^{\pm1}$\\ \cline{2-4}\hline


\multirow{2}{*}{$\dfrac{1}{z^2}$}  & $\frac{1}{2}(-1+\sqrt{3}i)$ &   $\alpha = \left<1,\delta \right>\sigma, \quad \beta=\left<1,\alpha^{-1}\beta\alpha \right>$ & $\{1,\alpha, \beta^n, \alpha^{-1}\beta^n\alpha, \beta^n\alpha\ |\ n\in\Z \}^{\pm1}$\\   \cline{2-4}

 & $\frac{1}{2}(-1-\sqrt{3}i)$  & $\alpha = \left<1,\gamma\right>\sigma, \quad \beta=\left<1,\beta \right>$ & $\{1, \alpha, \beta^n,\gamma\ |\ n\in\Z\}^{\pm1}$\\ \cline{2-4}\hline


& -1.347 &  $\alpha = \left<1,\delta\right>\sigma, \quad \beta=\left<1,\alpha \right>$ & $\{1,\alpha,\beta,\delta\}^{\pm1}$ \\ \cline{2-4}
  
$\dfrac{1}{1-z^2}$& $0.6624+.5623i$  &   $\alpha = \left<1,\gamma\right>\sigma, \quad \beta=\left<\alpha,1\right>$ & $\{1, \alpha, \beta, \gamma\}^{\pm1}$\\\cline{2-4}

& $0.6624-.5623i$  &   $\alpha = \left<1,\delta \right>\sigma, \quad \beta=\left<\alpha,1 \right>$   & $\{1, \alpha, \beta, \delta\}^{\pm1}$\\ 
  \hline

\end{tabular}
\caption{Wreath recursions for the maps on moduli space at all fixed points outside $\{0,1,\infty\}$, and the (possibly infinite) ``nucleus'' for those recursions, although for brevity we do not list inverses for the nucleus. Recall that $\gamma = \beta^{-1}\alpha^{-1}$ and $\delta = \alpha^{-1}\beta^{-1}$.
}\label{tbl:Nuclei}
\end{table}
\end{landscape}

\begin{theorem}
\label{thm:subhyp} 
The image of each of the (possibly infinite) nuclei listed in Table \ref{tbl:Nuclei} 
is finite in the faithful quotient of the self-similar action.
That is, the bisets of these self-similar actions are all sub-hyperbolic.
\end{theorem}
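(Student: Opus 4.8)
The plan is to verify sub-hyperbolicity separately for the finite nuclei and the infinite ones. For the rows of Table \ref{tbl:Nuclei} in which the recorded nucleus $N$ is finite, there is nothing to prove beyond what was already established in the discussion preceding the theorem: a finite state-closed set that is stable under restriction after a bounded number of steps gives a contracting action, hence a hyperbolic biset, which is in particular sub-hyperbolic (the faithful quotient of a finite set is finite). So the content of the theorem lies in the six rows whose nucleus is an infinite family indexed by $n\in\Z$: the recursions coming from $(1-2z)^2$ at $1/4$; from $1-\frac{1}{(1-2z)^2}$ at $1\pm\frac{i}{2}$; from $z^2$ (formal); and from $\frac{1}{z^2}$ at $\frac{1}{2}(-1\pm\sqrt{3}i)$.

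For each such infinite nucleus, the strategy is to exhibit a finite-index subgroup $H\le G$ acting trivially on the tree via its image in the quotient, equivalently to show that all but finitely many elements of $N$ act identically on $X^*$. Concretely, I would take the subgroup generated by the ``long directions'' of the $\Z$-families — typically $\beta$, or $\gamma=\beta^{-1}\alpha^{-1}$, or $\delta=\alpha^{-1}\beta^{-1}$, depending on the row — and check directly from the wreath recursion that it lies in the kernel of the action. For instance in the $\frac{1}{z^2}$, $\frac{1}{2}(-1-\sqrt{3}i)$ example already worked out in the text, one has $\beta=\langle1,\beta\rangle$ with trivial root permutation, so $\beta$ fixes $X^*$ pointwise; then every element $\beta^n$, $\alpha\beta^n$, etc., acts on the tree the same way as $\beta^{n'}$ for the appropriate residue, so the image of $N$ in the faithful quotient is $\{\bar1,\bar\alpha,\bar\gamma\}^{\pm1}$, which is finite. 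The same pattern works for $(1-2z)^2$ at $1/4$, where $\alpha=\langle1,1\rangle\sigma$ and $\beta=\langle\alpha,\beta\rangle$: here $\beta^2=\langle\alpha^2,\beta^2\rangle=\langle1,\beta^2\rangle$ (since $\alpha^2=\langle1,1\rangle$), so $\beta^2$ is in the kernel, and again only finitely many tree-actions occur. For $z^2$ one checks $\beta=\langle\beta,1\rangle$, trivial permutation, so $\beta$ fixes the tree; and for $1-\frac{1}{(1-2z)^2}$ at $1\pm\frac{i}{2}$ one similarly isolates $\gamma$ (resp. $\delta$) restricting to itself with trivial permutation and feeds that into $\beta=\langle\gamma,\beta\rangle$.

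The key lemma I would record is therefore: if some element $w\in G$ satisfies a wreath recursion of the form $w=\langle 1,1,\dots,1\rangle$ (trivial everywhere) then $w$ is in the kernel of the action on $X^*$; and more usefully, if $w=\langle w,\ast\rangle$ or $w=\langle\ast,w\rangle$ with trivial root permutation and all other coordinates in that same kernel, then $w$ is in the kernel. Iterating this identifies, in each of the six rows, a finite-index (in fact, one sees directly, infinite but co-finitely-trivial on the tree) subgroup whose elements all act trivially, and the image of $N$ becomes a finite set of coset representatives times a bounded number of ``short'' elements. Assembling these six explicit verifications, together with the trivial finite cases, proves that every biset in the table is sub-hyperbolic.

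The main obstacle is bookkeeping rather than conceptual: for each infinite row one must confirm that \emph{all} the listed generators of the $\Z$-family (not just the obvious power of $\beta$) descend to a finite set in the quotient, which requires expanding products like $\beta^{-1}\alpha^n\beta$, $\alpha^n\beta$, $\beta\alpha^n\beta^{-1}$ in the recursion and checking their restrictions land among the already-identified kernel elements modulo a bounded discrepancy. I would organize this by computing, once and for all in each row, the image of $\alpha$, $\beta$, $\gamma$ or $\delta$ in the finite quotient group $G/\ker$, reading off the relations forced by the recursion (e.g. $\beta^2=1$ in the quotient for the $(1-2z)^2$ case), and then observing that the whole infinite nucleus collapses onto the finitely many elements of that quotient. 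This is the step most prone to error and the one I would present in the most detail.
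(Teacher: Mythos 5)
Your strategy is essentially the paper's own secondary argument: the paper first disposes of the theorem in one line by observing that the faithful quotient is the iterated monodromy group of the corresponding $g\in Q(2)\cup Q(3)$, a postcritically finite rational map, so it is contracting by Theorem 6.4.4 of \cite{N} and the image of each nucleus is automatically finite; it then remarks that the statement ``can also be seen directly'' from Table \ref{tbl:Nuclei} because every element whose whole cyclic subgroup appears in an infinite nucleus has order $1$, $2$, or $4$ in the faithful quotient (e.g.\ $\alpha^2$ acts trivially whenever there is one critical postcritical point). Your proposal is a fleshed-out version of that direct check, carried out row by row on the six infinite nuclei; what you lose relative to the paper's main argument is brevity and uniformity, what you gain is explicit information about the quotient (which the paper only asserts in its parenthetical remark).

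Two concrete corrections are needed in your execution. First, for $1-\frac{1}{(1-2z)^2}$ at $1\pm\frac{i}{2}$ your stated mechanism fails: with $\alpha=\langle1,1\rangle\sigma$ and $\beta=\langle\gamma,\beta\rangle$ one computes $\gamma=\beta^{-1}\alpha^{-1}=\langle\gamma^{-1},\beta^{-1}\rangle\sigma$, so $\gamma$ is \emph{active} and does not ``restrict to itself with trivial permutation''; neither $\gamma$ nor $\beta$ lies in the kernel of the action. Instead one must pass to powers: $\gamma^2=\langle\gamma^{-1}\beta^{-1},\beta^{-1}\gamma^{-1}\rangle=\langle\alpha,\beta^{-1}\alpha\beta\rangle$, hence $\gamma^4=\langle\alpha^2,\beta^{-1}\alpha^2\beta\rangle$ acts trivially, and then $\beta^4=\langle\gamma^4,\beta^4\rangle$ acts trivially by the level induction; so $\beta$ and $\gamma$ have order $4$, not $1$, in the quotient (exactly the ``order $4$'' case the paper mentions), and the $\Z$-families still collapse. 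Second, your key lemma as stated does not cover the row $\frac{1}{z^2}$ at $\frac{1}{2}(-1+\sqrt{3}i)$, where $\beta=\langle1,\alpha^{-1}\beta\alpha\rangle$: the nontrivial restriction is a conjugate of $\beta$, not $\beta$ itself nor an element already known to be in the kernel. You need the mildly stronger induction statement that if $w$ is inactive and every restriction is either already trivial on the tree or a conjugate of $w$, then $w$ acts trivially (a conjugate $u^{-1}wu$ acts trivially on level $n$ exactly when $w$ does). With these repairs your case-by-case argument goes through and agrees with the paper's direct verification.
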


\begin{proof}
The faithful quotient of the action is the iterated monodromy group of the map $g$.
Recall that $g$ is always a member of $Q(2)\cup Q(3)$, so it is a postcritically finite rational function.
Then by Theorem 6.4.4 of \cite{N}, the image of the nucleus in $IMG(g)$ is finite.

However, this result can also be seen directly from Table \ref{tbl:Nuclei}.
Each element whose entire cyclic subgroup appears in a nucleus in Table \ref{tbl:Nuclei} 
has order either 1, 2, or 4 in the faithful quotient. 
This can be checked using the wreath recursions given in the table 
(for example, any map with one critical postcritical point has $\alpha^2$ acting trivially).
\end{proof}

\section{Solving the twisting problem}\label{sec:TwistedNET}

Recall that the twisting problem is the determination of the combinatorial class of a map $h \circ f$ where $f$ is a Thurston map and $h$ is a pure mapping class of $(\mathbb{S}^2, P_f)$.  
As above, we will use the identification $f \cdot \gamma := T_\gamma \circ f$ and treat $h$ as an element of the fundamental group of moduli space.

We give an algorithm to solve this problem for all maps $f\in Q(4)^*$ in the manner of \cite{BN06}:
define a virtual endomorphism $\phi$ on $\pi_1(\mathcal{M}_f,z_0)$ (which we identify with the pure mapping class group) and
extend it to a non-homomorphic map $\bar{\phi}$ on the entire group such that 
$ f \cdot h \simeq f \cdot \bar{\phi}(h)$ 
where $\simeq$ denotes combinatorial equivalence
(we will use $=$ to denote homotopy equivalence). Then we find the attractor of forward iteration of $\bar{\phi}$ on the group and 
determine the combinatorial equivalence class of each of those twists on $f$.

Recall that by Lemma \ref{Hurwitzlem}, the twists of $f$ will meet every combinatorial equivalence class that has the same dynamical portrait as $f$.
So we will enumerate all combinatorial equivalence classes so long as we solve the twisting problem for each of the dynamical portraits.

\subsection{The extended virtual endomorphism $\bar{\phi}$}
\label{sec:phibar}

For a map $f\in Q(4)^*$, we define the virtual endomorphism $\phi$ on $\pi_1(\mathcal{M}_f,z_0)$ as the first coordinate virtual endomorphism from the wreath recursion in Table \ref{tbl:Nuclei}.
So for the map in $Q(4)^*$ defined by fixed point $1/4$ of the $g$-map $(1-2z^2)$ for example, 
the virtual endomorphism $\phi$ would have domain generated by $\alpha^2, \beta, \alpha^{-1}\beta\alpha$
and would map the generators by 
\begin{equation}\label{eqn:veexample}
\phi(\alpha^2) = 1, \phi(\beta)=\alpha, \phi(\beta^\alpha) = \beta.
\end{equation}

We then define a map (not a homomorphism) $\phibar$ from the pure mapping class group to itself:
$$\phibar(h) = \left\{\begin{array}{ll}
\phi(h) & \text{if } h \in \text{Dom}(\phi)\\
\alpha\phi(h\alpha^{-1}) & \text{if } h \in \text{Dom}(\phi)\alpha
\end{array}\right.$$

\begin{lemma}

For $h\in\pi_1(\mathcal{M}_f,z_0)$,
$$f\cdot h \simeq f\cdot\phibar(h)$$

\end{lemma}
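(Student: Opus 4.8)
The plan is to unwind the definitions so that the statement becomes a concrete claim about lifts of loops in moduli space, and then verify it by a direct computation in the fundamental-group biset. Recall from Section~\ref{sec:wreath} that a pure mapping class $h$ is encoded by a loop $\gamma$ in $\mathcal{M}_f$ based at the fixed point $z_0$, and the point-pushing construction identifies $f\cdot h=T_\gamma\circ f$. Under the identification of the mapping class biset of $f$ with the fundamental-group biset of the map on moduli space $g$ at $z_0$ (via \cite{BN06} and \cite[Theorem 2.6]{Lod}), the right action of $h=[\gamma]$ on the biset element $x_1$ attached to the distinguished preimage labelled $1$ is given by the wreath-recursion rule $(1\cdot x_1)\cdot\gamma = h|_1\cdot \ell(1)$, where $\ell$ is the connecting path and $h|_1$ is the first-coordinate restriction. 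Because the connecting paths were chosen so that the preimage labelled $1$ is the fixed point $z_0$ itself and the connecting path there is constant, the permutation class of $f\cdot h$ is read off from $h|_1$, i.e. from $\phi(h)$ when $h\in\mathrm{Dom}(\phi)$. So the first step is to state precisely this dictionary: \emph{the combinatorial class of $f\cdot h$ depends only on the image of $x_1\cdot h$ in the biset, and when $h$ stabilizes the letter $1$ that image is $\phi(h)\cdot x_1$}.

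The second step handles the two cases in the definition of $\phibar$. If $h\in\mathrm{Dom}(\phi)$, then $f\cdot h$ and $f\cdot\phi(h)$ give the same biset element $\phi(h)\cdot x_1$ up to the left action of $G$ (which corresponds to isotopy), so $f\cdot h\simeq f\cdot\phibar(h)$ is immediate from the dictionary — this is essentially the content already used implicitly in \cite{BN06}. If instead $h\in\mathrm{Dom}(\phi)\alpha$, write $h=h'\alpha$ with $h'\in\mathrm{Dom}(\phi)$. Here the key point is that $\alpha$ is an \emph{active} generator: since $\alpha=\langle 1,\ast\rangle\sigma$ in every relevant wreath recursion in Table~\ref{tbl:Nuclei}, the element $\alpha$ swaps the two letters, so $x_1\cdot\alpha = \alpha|_1\cdot 2 = 1\cdot 2$ lands on the letter $2$ with trivial restriction (this is exactly the normalization ``$\alpha|_1=1$'' arranged in Section~\ref{sec:wreath}). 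Then $x_1\cdot h = (x_1\cdot\alpha)\cdot (\alpha^{-1}h)$; one computes $x_1\cdot h = \phi'(\ldots)\cdot x_?$ where the bookkeeping gives exactly $\alpha\,\phi(h\alpha^{-1})\cdot x_1$ after moving back to letter $1$ and using the constant connecting path at $z_0$. Thus $f\cdot h$ and $f\cdot(\alpha\phi(h\alpha^{-1}))$ determine the same biset element up to left multiplication, and combinatorial equivalence follows. A clean way to package both cases at once is to observe that $\phibar(h)\cdot x_{\ell(1)} = x_1\cdot h$ in the biset for every $h$, where $\ell$ is the appropriate connecting path, and then invoke the theorem identifying biset elements in the same left-$G$-orbit with combinatorially equivalent twists.

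The step I expect to be the main obstacle is making the case $h\in\mathrm{Dom}(\phi)\alpha$ genuinely rigorous rather than formal: one must check that $\alpha$ really is active for \emph{all} the wreath recursions (not just the displayed example), that the coset decomposition $G=\mathrm{Dom}(\phi)\sqcup\mathrm{Dom}(\phi)\alpha$ is exactly the two-coset decomposition coming from the degree-two cover (so that $\phibar$ is well-defined on all of $G$), and that the connecting-path choices do not introduce an extra factor — i.e. that the lift of $\alpha$ based at $z_0$ really ends at the second preimage and the other connecting path is the one used to return. This is the place where the careful setup in Section~\ref{sec:wreath} (constant connecting path at $z_0$, $\alpha|_1=1$, the specific lift of $\alpha$ as the connecting path) is doing all the work, so the proof should explicitly cite those choices. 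Once those normalizations are in hand, the verification is a short multiplication of wreath recursions; the conceptual content is entirely in the identification of the mapping class biset with the fundamental group biset, which we are permitted to assume from \cite{BN06} and \cite[Theorem 2.6]{Lod}.
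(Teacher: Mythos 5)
Your overall strategy (pass through the identification of the mapping class biset with the fundamental group biset and push twists across $f$) is in the same spirit as the paper, but the bridging claim you rely on is wrong, and it is the crux of your argument. You assert that the combinatorial class of $f\cdot h$ is determined by the biset element $x_1\cdot h$ ``up to the left action of $G$ (which corresponds to isotopy).'' Left translation in the biset does \emph{not} correspond to isotopy or to combinatorial equivalence: equivalence of twists corresponds to \emph{conjugation} in the biset, i.e.\ $f\cdot h\simeq f\cdot h'$ iff there is $g$ with $g\cdot(x_1\cdot h)=(x_1\cdot h')\cdot g$. If your criterion were correct, the class of $f\cdot h$ would depend only on which letter $h$ sends $1$ to, leaving at most two twist classes per portrait --- contradicting, e.g., the infinite family $\alpha\beta^n$ of pairwise distinct obstructed classes in Table \ref{tbl:Obstructed}. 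Concretely, for the recursion $\alpha=\langle1,1\rangle\sigma$, $\beta=\langle\alpha,\beta\rangle$ one has $\alpha^{-1}\beta\alpha\mapsto\langle\beta,\alpha\rangle$, so $x_1\cdot(\alpha^{-1}\beta\alpha)=\beta\cdot x_1$ lies in the left orbit of $x_1=x_1\cdot 1$, yet $f\cdot\alpha^{-1}\beta\alpha\simeq f\cdot\alpha$ is obstructed while $f$ is rational. Your intermediate identities also do not typecheck: $x_1\cdot\phi(h)\neq\phi(h)\cdot x_1$ in general (e.g.\ $x_1\cdot\alpha=x_2$), and in the case $h\in\mathrm{Dom}(\phi)\alpha$ the element $x_1\cdot h=\phi(h\alpha^{-1})\cdot x_2$ lands over the letter $2$; there is no canonical ``moving back to letter $1$,'' so the claimed equality with $\alpha\phi(h\alpha^{-1})\cdot x_1$ (and the packaging $\phibar(h)\cdot x_{\ell(1)}=x_1\cdot h$) is not a valid biset identity.

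The actual proof is much shorter and needs no element-by-element biset bookkeeping. It uses exactly two facts: (i) for $k\in\mathrm{Dom}(\phi)$ the twist $T_k$ lifts under $f$ to $T_{\phi(k)}$, i.e.\ $f\cdot k=\phi(k)\cdot f$ as homotopy classes (this is what the first-coordinate virtual endomorphism means under the cited identification); and (ii) for \emph{any} Thurston map $F$ with postcritical set $P_f$ and any pure class $g$, one has $g\cdot F\simeq F\cdot g$, by taking $h_0=h_1=T_g$ in the definition of combinatorial equivalence (this is the conjugation step your ``left action'' claim was trying to play the role of). Then $h\in\mathrm{Dom}(\phi)$ gives $f\cdot h=\phi(h)\cdot f\simeq f\cdot\phi(h)$, and $h\in\mathrm{Dom}(\phi)\alpha$ gives $f\cdot h=\phi(h\alpha^{-1})\cdot(f\cdot\alpha)\simeq(f\cdot\alpha)\cdot\phi(h\alpha^{-1})=f\cdot\phibar(h)$. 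Note also that the only normalization needed here is $\alpha\notin\mathrm{Dom}(\phi)$, so that $G=\mathrm{Dom}(\phi)\sqcup\mathrm{Dom}(\phi)\alpha$; the condition $\alpha|_1=1$ and the connecting-path choices you worried about are not used in this lemma --- they become essential only in Lemmas \ref{NaNlem}, \ref{phihatdef}, and \ref{phihatNlem}.
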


\begin{proof}
Notice if $h\in\text{Dom}(\phi)$, then $h$ lifts under $f$ to $\phi(h)$:
$$f\cdot h =\phi(h)\cdot f \simeq f\cdot \phi(h) = f\cdot \phibar(h)$$
and if $h\in\text{Dom}(\phi)\alpha$, then
$$f\cdot h = f\cdot h\alpha^{-1}\cdot \alpha = \phi(h\alpha^{-1})\cdot f\cdot \alpha \simeq f\cdot \alpha\phi(h\alpha^{-1})=f\cdot \phibar(h)$$
Thus, we have defined $\phibar$ such that $f\cdot h \simeq f\cdot \phibar(h)$ for all $h\in\pi_1(\mathcal{M}_f,z_0)$.
\end{proof}

To solve the twisting problem, we will study the dynamics of $\phibar$ on $\pi_1(\mathcal{M}_f,z_0)$.

\begin{lemma}
\label{NaNlem}

Let $h\in\pi_1(\mathcal{M}_f,z_0)$. 
There exists $N$ such that for all $n>N$
$$\phibar^{\circ n}(h) \in \mathcal{N}\cup\alpha\mathcal{N}$$

\end{lemma}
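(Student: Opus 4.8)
The plan is to show that forward iteration of $\phibar$ eventually lands (and stays) in $\mathcal{N}\cup\alpha\mathcal{N}$, where $\mathcal{N}$ is the (possibly infinite) nucleus of the wreath recursion associated to $f$. The key observation is that $\phibar$ is essentially the first-coordinate restriction map of the self-similar action, adjusted by a coset representative: if $h\in\mathrm{Dom}(\phi)$ then $\phibar(h) = h|_1$, and if $h\in\mathrm{Dom}(\phi)\alpha$ then $\phibar(h) = \alpha (h\alpha^{-1})|_1 = \alpha\, h|_{\alpha(1)}$ (using the wreath recursion multiplication rule and the fact, arranged by our choice of connecting paths, that $\alpha|_1 = 1$). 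So a single application of $\phibar$ records, up to a leading $\alpha$, the restriction of $h$ to a single letter at level one; iterating $\phibar$ $n$ times records (up to a leading $\alpha$) the restriction of $h$ to a suitable word of length $n$.

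First I would make the previous sentence precise: by induction on $n$, I claim $\phibar^{\circ n}(h) \in \{h|_v : |v|=n\}\cup\alpha\{h|_v : |v|=n\}$. The inductive step splits on whether $\phibar^{\circ n}(h)$ lies in $\mathrm{Dom}(\phi)$ or in $\mathrm{Dom}(\phi)\alpha$; in the first case $\phibar^{\circ(n+1)}(h) = (\phibar^{\circ n}(h))|_1$ is a restriction of $h|_v$ to a letter, hence a restriction of $h$ to a word of length $n+1$; in the second case $\phibar^{\circ(n+1)}(h) = \alpha\,(\phibar^{\circ n}(h))|_{x}$ for the appropriate $x$, and since the leading $\alpha$ contributes trivial restriction ($\alpha|_1=1$) this is again $\alpha$ times a restriction of $h$ to a word of length $n+1$. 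Second, I would invoke the defining property of the (possibly infinite) nucleus from the Background section: the nucleus $\mathcal{N}$ has exactly the same definition as in the contracting case, namely there is $n_h\in\N$ such that $h|_v\in\mathcal{N}$ for every word $v$ with $|v|\geq n_h$. (That such an $\mathcal{N}$ exists for each of our recursions is precisely the content of Table \ref{tbl:Nuclei}, since the listed sets are state-closed and absorb all sufficiently deep restrictions of $N^2$, by the verification method described before the table.) Combining the two: for $n > N := n_h$, every restriction $h|_v$ with $|v|=n$ lies in $\mathcal{N}$, so $\phibar^{\circ n}(h) \in \mathcal{N}\cup\alpha\mathcal{N}$.

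The main obstacle is the bookkeeping in the inductive step — specifically, verifying that the coset representative correction ($\alpha$ in the $\mathrm{Dom}(\phi)\alpha$ case) genuinely contributes a trivial factor to all further restrictions, so that $\phibar$-iteration really does track honest tree-restrictions of the \emph{original} $h$ rather than accumulating extra error terms. This is exactly where the normalization $\alpha|_1=1$ from the choice of connecting paths (Section \ref{sec:wreath}) is essential, together with the fact that $\alpha$ has order $2$ on the first level so that $\alpha(1)=2$ is forced in the active case. Once that is pinned down, the argument is a clean marriage of the definition of $\phibar$ with the definition of the nucleus, and requires no new computation beyond what is already tabulated.
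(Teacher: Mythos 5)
Your proposal is correct and follows essentially the same route as the paper: you check, using $\alpha=\langle 1,\alpha|_2\rangle\sigma$, that each application of $\phibar$ sends the current element to a restriction of it or to $\alpha$ times a restriction (this is exactly where $\alpha|_1=1$ is used), and then induction plus the defining property of the (possibly infinite) nucleus yields $\phibar^{\circ n}(h)\in\mathcal{N}\cup\alpha\mathcal{N}$ for all $n>N=n_h$. The only blemish is the formula $\phibar(h)=\alpha\,h|_{\alpha(1)}$ for $h\in\mathrm{Dom}(\phi)\alpha$: since $(\alpha^{-1})|_2=1$, the correct value is $\alpha\,h|_1$, but this indexing slip is harmless because your argument only needs that the output is $\alpha$ times some length-one restriction of $h$.
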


\begin{proof}
Recall that we chose to make the wreath recursion of $\alpha$ of the form $\alpha=\left<1, \alpha|_2\right>\sigma$.
Notice that if $h=\left<h_1, h_2\right>$, then $\phibar(h) = h_1$ and
$$\phibar(\alpha h) = \alpha\phi(\alpha h\alpha^{-1})=\alpha\phi(\left<1, \alpha|_2\right>\sigma\left<h_1, h_2\right>\left<\alpha|_2^{-1}, 1\right>\sigma) = \alpha h_2$$
and if $h=\left<h_1, h_2\right>\sigma$, then 
$$\phibar(h) = \alpha\phi(h\alpha^{-1}) = \alpha\phi(\left<h_1,h_2\right>\sigma\left<\alpha|_2^{-1}, 1\right>\sigma) = \alpha h_1$$
and
$$\phibar(\alpha h) = \phi(\alpha h)=\phi(\left<1, \alpha|_2\right>\sigma\left<h_1, h_2\right>\sigma) = h_2$$
Thus, $\phibar(h)$ is either equal to a restriction of $h$ or $\alpha$ times a restriction of $h$.
Since taking repeated restrictions of $h$ eventually enters (and remains in) $\mathcal{N}$, by the computations above we have that iterating $\phibar$ on any element of the fundamental group will eventually land in the set $\mathcal{N}\cup \alpha\mathcal{N}$.
\end{proof}

Consequently, to solve the twisting problem we compute the action of $\phibar$ on the set $\mathcal{N}\cup\alpha\mathcal{N}$. For example, consider the virtual endomorphism defined by equations \ref{eqn:veexample} above.
The resulting dynamics of $\bar{\phi}$ on the set $\mathcal{N}\cup\alpha\mathcal{N}$ are:
$$\beta^{2n}\to\alpha^{2n}\to1\to1\qquad\beta^{2n+1}\to\alpha^{2n+1}\to\alpha\to\alpha\qquad \alpha\beta^n\to\alpha\beta^n$$
%
The periodic elements of this action are displayed in Table \ref{tbl:phibar} for one representative of each pure Hurwitz class (e.g. each equivalence class of dynamical portrait).

\begin{remark}\label{rmk:OtherHurwitz}
While we have only performed the $\bar{\phi}$ computations for one $Q(4)^*$ map within each pure Hurwitz class,
we can write an element of any other combinatorial class as a twist of the first $Q(4)^*$ map (by Lemma \ref{Hurwitzlem}), and then perform the same method.
For instance, we made no $\bar{\phi}$ computations for 
the rational map $f_-$ defined by the fixed point $\frac{1}{2}(-1-\sqrt{5})$ of the $g$-map $1-z^2$.
However, for the rational map $f_+$ defined by the other fixed point of that $g$-map, we have that 
$f_- \simeq f_+\cdot \alpha$ since $\alpha$ by definition
has non-trivial monodromy action and there are no other $Q(4)^*$ maps in the pure Hurwitz class.
To twist $f_-$ by $h$, we simply use $\bar{\phi}^{\circ n}(\alpha h)$ with $\phibar$ computed from $f_+$
and determine the resulting $\phibar$ cycle as usual.
\end{remark}

\subsection{Identifications}\label{sec:ids}

All that remains is to identify each $\phibar$ cycle with a combinatorial equivalence class of Thurston maps. 
Usually distinct cycles will result in distinct classes, but not always (e.g. see the analysis for $g(z) = z^2$).
The solution to the twisting problem, then, is to apply $\bar{\phi}$ repeatedly to the specified twist
until obtaining one of the periodic elements, each of which is identified with a combinatorial class.

\begin{proof}[Theorem \ref{thm:classification}]
Recall that any map in $Q(2)$ or $Q(3)$ is unobstructed, so from the list of dynamical portraits in Table \ref{tbl:Quad3} one simply computes coefficients to give an enumeration. Each rational map in $Q(4)^*$ can be found the same way (see Table \ref{tbl:Hurwitz1}), and a cross ratio argument shows that any two such maps are not M\"obius conjugate.  From the preceding discussion, all obstructed maps in $Q(4)^*$ must lie in a $\phibar$ cycle, and the remainder of our proof is occupied with analyzing the corresponding Thurston classes (organized by map on moduli space). For all $\phibar$ attractors, the trivial element is identified with the combinatorial class of the $Q(4)^*$ map used in the $\bar{\phi}$ computations.

\begin{itemize}

\item $g(z)=(1-2z)^2$:
the periodic element $\alpha$ is identified with the equivalence class of a Thurston map with obstruction $\beta$.
Twisting about this obstruction produces an infinite family of distinct combinatorial classes of obstructed maps
(see the argument in \cite[Theorem 9.2 V]{KPS}).

\item $g(z)=\frac{1}{1-(1-2z)^2}$: we can distinguish between the combinatorial classes identified with $\alpha$ and with the 2-cycle by considering the finite global attractor (FGA) results proved in the next section.
If $f$ is the $Q(4)^*$ map defined by the fixed point with positive imaginary part (i.e. the $Q(4)^*$ map used to compute the virtual endomorphism $\phi$ from the first coordinate of the wreath recursion), then we can compute the FGA for $f\cdot\alpha$ by changing the virtual endomorphism to instead be associated with the second coordinate.
When we make these calculations, we see that the FGA for $f\cdot\alpha$ has the 2-cycle $\beta\leftrightarrow\gamma$, so $\alpha$ is identified with the combinatorial class of the $Q(4)^*$ map defined by fixed point $-.4196$.
Thus, the combinatorial class of the $Q(4)^*$ map defined by the fixed point with negative imaginary part is identified with the 2-cycle $\gamma^{-1}\leftrightarrow\alpha\gamma$.

\item $g(z)=1-\frac{1}{(1-2z)^2}$: the 2-cycle is identified with the combinatorial class of the $Q(4)^*$ map defined by other fixed point of the $g$-map,
and the $\alpha\beta^n$ are identified with an infinite family of combinatorial classes of obstructed maps as above.

\item $g(z)=z^2$: in the row of Table \ref{tbl:phibar} for the $g$-map $z^2$, 
we have two families of fixed points of $\phibar$: $\beta^n$ and $\alpha^2\beta^n\alpha^{-1}$ for $n\in\Z$. 
In fact, these are both identified with the same family of combinatorial classes of obstructed maps. 
Let $f$ be the (obstructed) $Q(4)^*$ map used to make the $\phibar$ calculations and 
observe that since $\alpha^2 = \left<\alpha, \alpha\right>$ for this wreath recursion, then
$$f\cdot \alpha^2\beta^n\alpha^{-1} = \alpha \cdot f \cdot \beta^n\alpha^{-1}$$
and by conjugation we have that
$$\alpha \cdot f \cdot \beta^n\alpha^{-1} \simeq f\cdot \beta^n$$
Thus, $f\cdot \beta^n$ and $f\cdot \alpha^2\beta^n\alpha^{-1}$ are equivalent maps.

\item  $g(z)=1-z^2$: let $f$ be the $Q(4)^*$ map defined by the fixed point $\frac{1}{2}(-1+\sqrt{5})$ of $g$. We can show that $f \cdot \alpha^2\beta^{-1} \simeq f$
(since $\alpha^2 = \left<\beta, \beta\right>$ in this setting),
and so the 2-cycle is identified with the combinatorial class of $f$.
Thus, $\alpha$ is identified with the combinatorial class of the $Q(4)^*$ map defined by the other fixed point.

\item $g(z)=\frac{1}{z^2}$: the 2-cycle is identified with the combinatorial class of the fixed point of the $g$-map with positive imaginary part,
and the $\alpha\beta^n$ are identified with an infinite family of combinatorial classes of obstructed maps.

\item $g(z)=\frac{1}{1-z^2}$: we again use the virtual endomorphism for the second coordinate to compute the FGA for the map identified with $\alpha$.
This computation reveals it to match the $Q(4)^*$ map defined by the fixed point with negative imaginary part, and so the $Q(4)^*$ map defined by the fixed point with positive imaginary part is identified with the 3-cycle.

\end{itemize}
We have thus shown that each $\phibar$ cycle either corresponds to one of the rational maps already found, or lies in a one-parameter family of obstructed twists found in Table \ref{tbl:Obstructed}.
\end{proof}

\begin{table}
\caption{All obstructed non-Euclidean quadratic Thurston maps with four postcritical points}
\begin{tabular}{c|c|c|l}
\hline
$f\in Q(4)^*$ & $\bullet$ & $g$   & dynamical portrait of $f$ \\\hline\hline


 $T_{\alpha\beta^n}\circ M_{\bullet,0}\circ g$ & $\frac{1}{4}$& & $\xymatrix{
\frac{1}{2}\ar@{=>}[r] &\bullet\ar[r] & 0\ar[r] & \infty \ar@{=>}@/^/[r] & 1 \ar@/^/[l]}$   \\ 
  \cline{1-2}\cline{4-4}

$T_{\alpha\beta^n}\circ M_{\bullet,1}\circ g$ & $\frac{1}{4}$ & $(1-2z)^2$  & $\xymatrix{\frac{1}{2}\ar@{=>}[r] &\infty\ar@{=>}[r] & 0 \ar[r] &\bullet\ar@/^/[r] & 1 \ar@/^/[l]}$   \\ \cline{1-2}\cline{4-4}

$T_{\alpha\beta^n}\circ M_{\bullet,\infty}\circ g$ & $\frac{1}{4}$&  &  $\xymatrix{\frac{1}{2}\ar@{=>}[r] & 1 \ar[r] & 0 \ar@(ur,dr)[] &\infty\ar@{=>}@/^/[r] & \bullet \ar@/^/[l]}$    \\ \hline


$T_{\alpha\beta^n}\circ M_{\bullet,\infty}\circ g$ &   $1 + \frac{1}{2}i$ &  & $\xymatrix{\frac{1}{2}\ar@{=>}[r] &\bullet\ar[r] &\infty\ar@{=>}[r] & 0 \ar[r] & 1 \ar@(ur,dr)[]}$  \\   \cline{1-2}\cline{4-4}

$T_{\alpha\beta^n}\circ M_{\bullet,0}\circ g$ &  $1 + \frac{1}{2}i$ & $1-\dfrac{1}{(1-2z)^2}$  & $\xymatrix{\frac{1}{2}\ar@{=>}[r] & 1\ar[r] & \bullet \ar@/^/[r] & 0 \ar@/^/[l] &\infty\ar@{=>}@(ur,dr)[]}$  \\ \cline{1-2}\cline{4-4}

$T_{\alpha\beta^n}\circ M_{\bullet,1}\circ g$ & $ 1 + \frac{1}{2}i$ &   & $\xymatrix{\frac{1}{2}\ar@{=>}[r] & 0 \ar[r] &\infty \ar@{=>}[r] &\bullet\ar[r] & 1 \ar@/^/[ll]}$    \\  \hline

  
$T_{\beta^n}\circ M_{\bullet,\infty}\circ P_{1/4}\circ g$ & See caption & $z^2$  & $\xymatrix{0\ar@{=>}@/^/[r] & 1 \ar@/^/[l]   &\infty\ar@{=>}@/^/[r] &\bullet\ar@/^/[l]}$  \\
\hline


$T_{\alpha\beta^n}\circ M_{\bullet,0}\circ g$ &  $\frac{1}{2}(-1-\sqrt{3}i)$ & $\frac{1}{z^2}$ & $\xymatrix{
0 \ar@{=>}[r] & 1 \ar[r] &\infty \ar@{=>}[r] &\bullet \ar@/^/[lll]}$  \\
\hline

\end{tabular}
\caption*{Each obstructed combinatorial class $Q(4)^*$ is represented by exactly one element in the first column where $n\in\mathbb{Z}$. The notation $T_\gamma$ denotes the point-push of $\bullet$ along the closed curve $\gamma$ given in terms of the fundamental group (the reader is referred to Section \ref{sec:wreath} for basepoint and generator conventions). We denote by $P_{1/4}$ the homeomorphism defined by a real symmetric point-push carrying $1/16$ to $1/4$ while fixing $\{0,1,\infty\}$ for all time. Thus $\bullet = 1/4$ is fixed by $P_{1/4}\circ g$ where $g(z)=z^2$.}\label{tbl:Obstructed}
\end{table}

\begin{table}
\begin{tabular}{c|lll}

$g$-map & Wreath recursion & Attractor of the $\bar{\phi}$ map &FGA \\\hline\hline


\hline  

$(1-2z)^2$ & $\alpha = \left<1,1\right>\sigma, \quad \beta=\left<\alpha,\beta\right>$ &  $\xymatrix{1 \ar@(ur,dr)[]}\hspace{.55cm}, \alpha\beta^n\hspace{-.15cm}\xymatrix{ \ar@(ur,dr)[]}\qquad$ for $n\in\Z$ & $\odot$ \\  \cline{2-3}\hline



&  $\alpha = \left<1,1\right>\sigma, \quad \beta=\left<\delta,\beta\alpha\beta^{-1} \right>$ & ---------  & $\odot,\; \beta\leftrightarrow\gamma$\\ \cline{2-4}
  
$\dfrac{1}{1-(1-2z)^2}$ &  $\alpha = \left<1,1\right>\sigma, \quad \beta=\left<\alpha,\gamma \right>$ & $\xymatrix{1 \ar@(ur,dr)[]}\hspace{.55cm}, \xymatrix{\alpha \ar@(ur,dr)[]}\hspace{.55cm},\hspace{.2cm} \gamma^{-1}\leftrightarrow\alpha\gamma $ & $\odot$ \\\cline{2-4}

&   $\alpha = \left<1,1\right>\sigma, \quad \beta=\left<\alpha,\delta \right>$   & --------- & $\odot$\\ 
  \hline


\multirow{2}{*}{$1-\dfrac{1}{(1-2z)^2}$} &  $\alpha = \left<1,1\right>\sigma, \quad \beta=\left<\gamma,\beta \right>$ & $\xymatrix{1 \ar@(ur,dr)[]}\hspace{.55cm},\hspace{.15cm} \gamma\leftrightarrow\alpha\gamma^{-1},\hspace{.15cm} \alpha\beta^n\hspace{-.15cm}\xymatrix{ \ar@(ur,dr)[]}\hspace{.55cm}$ for $n\in\Z$ & $\odot$ \\   \cline{2-4}

& $\alpha = \left<1,1\right>\sigma, \quad \beta=\left<\delta,\beta \right>$ & --------- & $\odot$ \\ \cline{2-3}

\hline
\hline 

$z^2$ & $\alpha = \left<1,\alpha\right>\sigma, \quad \beta=\left<\beta,1\right>$ & $\beta^n\hspace{-.15cm} \xymatrix{ \ar@(ur,dr)[]}\hspace{.6cm},$ $\alpha^2\beta^n\alpha^{-1}\hspace{-.15cm}\xymatrix{ \ar@(ur,dr)[]}\hspace{.6cm}$ for $n\in\Z$  
& N/A
\\ \hline



\multirow{2}{*}{$1-z^2$} &    $\alpha = \left<1,\beta \right>\sigma, \quad \beta=\left<\alpha,1 \right>$ & $\xymatrix{1 \ar@(ur,dr)[]}\hspace{.55cm}, \xymatrix{\alpha \ar@(ur,dr)[]}\hspace{.5cm},\hspace{.1cm} \beta\alpha^{-1}\leftrightarrow\alpha^2\beta^{-1} $ & $\odot,\; \alpha\leftrightarrow\beta, \gamma\leftrightarrow\delta$ \\   \cline{2-4}

& $\alpha = \left<1,\beta\right>\sigma, \quad \beta=\left<1,\beta\alpha\beta^{-1} \right>$ & --------- & $\odot, \xymatrix{\delta \ar@(ur,dr)[]}$\\ \cline{2-3}\hline


\multirow{2}{*}{$\dfrac{1}{z^2}$}&   $\alpha = \left<1,\delta \right>\sigma, \quad \beta=\left<1,\alpha^{-1}\beta\alpha \right>$ & --------- & $\odot,\;\alpha\leftrightarrow\delta$ \\   \cline{2-4}

& $\alpha = \left<1,\gamma\right>\sigma, \quad \beta=\left<1,\beta \right>$ & $\xymatrix{1 \ar@(ur,dr)[]}\hspace{.55cm}, \gamma\leftrightarrow\alpha\gamma^{-1}, \hspace{.1cm}\alpha\beta^n\hspace{-.15cm}\xymatrix{ \ar@(ur,dr)[]}\hspace{.55cm}$ for $n\in\Z$ & $\odot,\;\alpha\leftrightarrow\gamma$ \\ \cline{2-3}\hline


&  $\alpha = \left<1,\delta\right>\sigma, \quad \beta=\left<1,\alpha \right>$ & $\xymatrix{1 \ar@(ur,dr)[]}\hspace{.55cm}, \xymatrix{\alpha \ar@(ur,dr)[]}\hspace{.5cm}, \xymatrix{\delta \ar[r] &\gamma^{-1} \ar[r] & \alpha^2 \ar@/^/[ll]}$ & $\odot$ \\ \cline{2-4}
  
$\dfrac{1}{1-z^2}$ &   $\alpha = \left<1,\gamma\right>\sigma, \quad \beta=\left<\alpha,1\right>$ & --------- & $\odot,\xymatrix{\alpha \ar[r] &\gamma \ar[r] & \beta \ar@/^/[ll]}$ \\\cline{2-4}

 &  $\alpha = \left<1,\delta \right>\sigma, \quad \beta=\left<\alpha,1 \right>$   & --------- & $\odot,\xymatrix{\alpha \ar[r] &\delta \ar[r] & \beta \ar@/^/[ll]}$\\ 
  \hline

\end{tabular}
\caption{Wreath recursion for each map on moduli space at each fixed point not in $\{0,1,\infty\}$, the attractor for the non-homomorphism map that is used to solve the twisting problem, and the minimal finite global attractor for the pullback map on curves. Note that we compute the $\phibar$ attractor for only one map in each pure Hurwitz class. This is all that is required for the enumeration; the other attractors can be computed using Remark \ref{rmk:OtherHurwitz} if desired.  Recall that $\gamma = \beta^{-1}\alpha^{-1}$ and $\delta = \alpha^{-1}\beta^{-1}$. We also note that the infinite global attractor for $g(z)=z^2$ is $\{\odot, \alpha^{\beta^n}, \beta, \gamma^{\beta^n}, \delta^{\beta^n}, \beta^{\alpha\beta^n}\ | \ n\in\Z\}$}\label{tbl:phibar}
\end{table}

\section{Pullback on essential curves}\label{sec:PullbackOnCurves}
Recall that the collection of all homotopy classes of essential curves is denoted $\mathcal{C}_f$.

\begin{definition} The \emph{pullback function on curves} $\mu_f:\mathcal{C}_f\cup\{\odot\}\to\mathcal{C}_f\cup\{\odot\}$ is defined as follows:
\begin{itemize}
\item if $\gamma\in\mathcal{C}_f$, and $f^{-1}(\gamma)$ has an essential component $\tilde{\gamma}$, then $\mu_f(\gamma)=\tilde{\gamma}$
\item if $\gamma\in\mathcal{C}_f$, and $f^{-1}(\gamma)$ does not have an essential component, then $\mu_f(\gamma)=\odot$
\item $\mu_f(\odot)=\odot$.
\end{itemize}
\end{definition}

\begin{definition}
A \emph{finite global attractor} of the pullback function on curves $\mu_f$ is a finite, forward invariant set $\mathcal{A}\subset\mathcal{C}_f\cup\{\odot\}$ so that for each $\gamma$, there exists $n$ so that $\mu_f^{\circ n}(\gamma)\in\mathcal{A}$.
\end{definition}

Not every Thurston map has a finite global attractor for its pullback on curves (one example is given below). Pilgrim has conjectured (personal communication) that if $f$ is a rational map with hyperbolic orbifold, then $\mu_f$ has a finite global attractor. The program NETmap \cite{NET} has provided a great deal of evidence in favor of this conjecture, and a number of results have been proven in special cases \cite{TW,Lod,KPS,FKKLPPS}. 

We investigate the global dynamics of the pullback on curves for rational $Q(4)^*$ maps.

\begin{theorem}
\label{thm:FGA}
Let $f$ be a rational $Q(4)^*$ map. The pullback on curves $\mu_f$ has a finite global attractor.
The minimal finite global attractors are exhibited in Table \ref{tbl:phibar}.
\end{theorem}

Our results verify Theorem 8 Part (1) of \cite{FKKLPPS}, where it is shown that the minimal finite global attractor of any rational $Q(4)^*$ map with one critical postcritical point has at most four curves in $\mathcal{C}_f$, using entirely different methods from our own.

\subsection{Computing the pullback on curves}
\label{sec:phihat}

Let $f\in Q(4)^*$ be a rational map. We will compute its pullback on curves in terms of the virtual endomorphism on moduli space using the naturality statement of \cite[Theorem 2.6]{Lod}.  Let $C$ be an essential curve in $(\mathbb{S}^2,P_f)$ and denote by  $G$ the group $\pi_1(\rs\setminus\{0,1,\infty\},z_0)$; recall this is a free group generated by $\alpha$ and $\beta$. The point-pushing isomorphism (see section \ref{sec:wreath}) identifies the right hand Dehn twist about $C$ with a unique parabolic element of $G$
having the form $g=w^{-1}xw$ where $x\in\{\alpha,\beta,\gamma\}$ and $w\in G$.  In case $C$ has an essential preimage, the pullback $\mu_f(C)$ is given by the essential curve corresponding to the parabolic element $\phi((x^n)^w)$, where $n$ is minimal so that the expression is defined (i.e. $w^{-1}x^n w$ is in the domain of $\phi$).  If $C$ has no essential preimage, then $\phi((x^n)^w)$ is trivial.
The computation of the pullback function $\mu_f$ can thus be phrased entirely in terms of group theory.

We extend the virtual endomorphism $\phi$ to a map (not a homomorphism) $\hat{\phi}:G\rightarrow G$ by
$$\hat{\phi}(w) = \left\{\begin{array}{ll}
\phi(w) & \text{if } w \in \text{Dom}(\phi)\\
\phi(\alpha w) & \text{if } w \in \alpha^{-1}\text{Dom}(\phi).
\end{array}\right.$$
This extension is motivated by the following lemma, which shows how $\phi$ transforms parabolic elements in terms of $\hat{\phi}$.

\begin{lemma}
\label{phihatdef}
Let $x\in\{\alpha,\beta,\gamma\}$ and $w\in G$. For $n$ minimal so that $\phi((x^n)^w)$ is defined,
set $h = x^n$.
Then   
$$\phi(h^{w}) = \left\{\begin{array}{ll}
h_1^{\hat{\phi}(w)} & \text{if } w \in \text{Dom}(\phi)\\
h_2^{\hat{\phi}(w)}& \text{if } w \in \alpha^{-1}\text{Dom}(\phi)
\end{array}\right.$$
where $h_1$ and $h_2$ are the restrictions of $h$. 

\end{lemma}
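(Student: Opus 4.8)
The plan is to unwind the definitions of the wreath recursion, the virtual endomorphism $\phi$ (which is the first-coordinate restriction map), and the extended map $\hat\phi$, and then to do a direct computation of $\phi(h^w)$ by splitting into the two cases according to whether $w$ lies in $\mathrm{Dom}(\phi)$ (i.e. $\pi_w$ is trivial) or in $\alpha^{-1}\mathrm{Dom}(\phi)$ (i.e. $\pi_w = \sigma$). The crucial structural input, available because of the normalization chosen in Section~\ref{sec:wreath}, is that $\alpha = \langle 1, \alpha|_2\rangle\sigma$; that is, $\alpha|_1 = 1$. I will also use that, since $h = x^n$ is a power of a core curve with $n$ chosen minimally so that $h^w$ lies in the domain of $\phi$, the element $h^w = w^{-1}x^n w$ has trivial permutation, hence so does $h = w^{-1}(h^w)w$ composed appropriately — more precisely, I will track the permutation $\pi_w$ of $w$ and observe that $h^w\in\mathrm{Dom}(\phi)$ forces $\pi_h$ to be trivial when $\pi_w$ is trivial, and forces $h$ to have permutation $\sigma$ when $\pi_w = \sigma$, since $\pi_{w^{-1}x^nw} = \pi_w^{-1}\pi_h\pi_w$ and conjugation preserves triviality. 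So in the first case $h = \langle h_1, h_2\rangle$ and in the second case $h = \langle h_1, h_2\rangle\sigma$; in either case $h_1, h_2$ are well-defined as "the restrictions of $h$."

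The first case: suppose $w\in\mathrm{Dom}(\phi)$, so $w = \langle w_1, w_2\rangle$ with $\hat\phi(w) = \phi(w) = w_1$, and $h = \langle h_1, h_2\rangle$. Then using the multiplication rule for wreath recursions, $h^w = w^{-1}hw = \langle w_1^{-1}, w_2^{-1}\rangle\langle h_1, h_2\rangle\langle w_1, w_2\rangle = \langle w_1^{-1}h_1w_1,\ w_2^{-1}h_2w_2\rangle$, which has trivial permutation, so $\phi(h^w) = w_1^{-1}h_1w_1 = h_1^{w_1} = h_1^{\hat\phi(w)}$, as claimed. The second case: suppose $w\in\alpha^{-1}\mathrm{Dom}(\phi)$, so $\alpha w\in\mathrm{Dom}(\phi)$ and $\hat\phi(w) = \phi(\alpha w)$. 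Write $w = \langle w_1, w_2\rangle\sigma$; then $\alpha w = \langle 1, \alpha|_2\rangle\sigma\langle w_1, w_2\rangle\sigma = \langle w_2, \alpha|_2 w_1\rangle$, so $\phi(\alpha w) = w_2$, i.e. $\hat\phi(w) = w_2$. Now $h = \langle h_1, h_2\rangle\sigma$, and $h^w = w^{-1}hw$; carrying out the three-fold product with $w^{-1} = \langle (w_2)^{-1},(w_1)^{-1}\rangle\sigma$ (taking care with the inversion formula $\langle a,b\rangle\sigma)^{-1} = \langle b^{-1}, a^{-1}\rangle\sigma$) I get a recursion whose first coordinate works out to $w_2^{-1}h_2 w_2 = h_2^{w_2} = h_2^{\hat\phi(w)}$. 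So $\phi(h^w) = h_2^{\hat\phi(w)}$, again as claimed.

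The main obstacle I anticipate is bookkeeping rather than conceptual: one must be careful that the "restrictions of $h$" referenced in the statement are exactly the $h_1, h_2$ appearing in the wreath recursion $h = \langle h_1, h_2\rangle$ (first case) or $h = \langle h_1, h_2\rangle\sigma$ (second case), and that in the second case the relevant restriction picked out by $\phi\circ(\text{conjugation by }w)$ is $h_2$ and not $h_1$ — this is exactly where the hypothesis $\alpha|_1 = 1$ is doing its work, and it is the one place where an error in the permutation tracking would give the wrong answer. A secondary point to verify carefully is that the minimality of $n$ (and hence the precise element $h$) is consistent across the two cases, i.e. that "$\phi((x^n)^w)$ is defined" is equivalent to "$w^{-1}x^nw$ has trivial permutation," which is immediate since $\mathrm{Dom}(\phi)$ is exactly the stabilizer of the first level; I will state this explicitly before splitting into cases. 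Once the permutation types of $w$ and $h$ are pinned down, the remainder is the routine wreath-product algebra sketched above.
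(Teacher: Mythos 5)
Your overall strategy (case split on $\pi_w$, direct wreath--recursion computation, using $\alpha=\langle 1,\alpha|_2\rangle\sigma$) is the same as the paper's, and your first case is fine, but your second case contains a genuine error in the permutation bookkeeping. You claim that $w\in\alpha^{-1}\mathrm{Dom}(\phi)$ (i.e.\ $\pi_w=\sigma$) together with $h^w\in\mathrm{Dom}(\phi)$ forces $\pi_h=\sigma$. This is backwards: the permutations live in $S_2$, which is abelian, so $\pi_{h^w}=\pi_w^{-1}\pi_h\pi_w=\pi_h$, and definedness of $\phi(h^w)$ therefore forces $h=\langle h_1,h_2\rangle$ with \emph{trivial} permutation in both cases (indeed, "conjugation preserves triviality" is exactly the observation that proves this, not the opposite). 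Moreover, under your false premise the computation does not go through: with $h=\langle h_1,h_2\rangle\sigma$ and $w=\langle w_1,w_2\rangle\sigma$ one gets $w^{-1}hw=\langle w_2^{-1}h_2w_1,\ w_1^{-1}h_1w_2\rangle\sigma$, whose first coordinate is $w_2^{-1}h_2w_1$ (not $w_2^{-1}h_2w_2$) and whose permutation is $\sigma$, so this element is not in $\mathrm{Dom}(\phi)$ at all and $\phi(h^w)$ would be undefined, contradicting the choice of $n$. So the identity you assert at the end of case two is not produced by the computation you describe.

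The repair is short: with the corrected premise $h=\langle h_1,h_2\rangle$ and $w=\langle w_1,w_2\rangle\sigma$, the same three-fold product gives $w^{-1}hw=\langle w_2^{-1}h_2w_2,\ w_1^{-1}h_1w_1\rangle$, which has trivial permutation, so $\phi(h^w)=h_2^{w_2}=h_2^{\hat\phi(w)}$ (your computation $\hat\phi(w)=\phi(\alpha w)=w_2$ is correct). This is equivalent to the paper's argument, which writes $w=\alpha^{-1}w'$ with $w'\in\mathrm{Dom}(\phi)$, uses that $\phi$ is a homomorphism on its domain to get $\phi(h^w)=\bigl(\phi(\alpha h\alpha^{-1})\bigr)^{\phi(w')}$, and then computes $\phi(\alpha h\alpha^{-1})=h_2$ from $\alpha h\alpha^{-1}=\langle h_2,\ \alpha|_2h_1\alpha|_2^{-1}\rangle$.
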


\begin{proof}

Let $h=\left<h_1, h_2\right>$ and suppose $w\in \text{Dom}(\phi)$.
Then $$\phi(h^w) = h_1^{\phi(w)}$$
If instead $w\notin \text{Dom}(\phi)$, then $w\in \alpha^{-1}\text{Dom}(\phi)$, 
so there exists $w'\in \text{Dom}(\phi)$ such that $w=\alpha^{-1}w'$.
Then
$$\phi(h^w) = \phi(w'^{-1}\alpha h\alpha^{-1}w') = (\phi(\alpha h\alpha^{-1}))^{\phi(w')}$$
Recall that we chose to make the wreath recursion of $\alpha$ of the form $\alpha=\left<1, \alpha|_2\right>\sigma$.
Thus,
$$(\phi(\alpha h\alpha^{-1}))^{\phi(w')} = (\phi(\left<1, \alpha|_2\right>\sigma \left<h_1,h_2\right>\left<\alpha|_2^{-1}, 1\right>\sigma))^{\phi(w')} = h_2^{\phi(w')}$$

\end{proof}

The next lemma describes the global dynamics of $\hat{\phi}$.  The simplicity of this statement depends heavily on our earlier choice that $\alpha|_1=1$ for every single map.

\begin{lemma}
\label{phihatNlem}

Let $w\in G$.
There exists $N$ such that for $n>N$, we have that
$$\hat{\phi}^{\circ n}(w) \in \mathcal{N}$$
where $\mathcal{N}$ is the (possibly infinite) nucleus of the self-similar action associated with $\phi$.

\end{lemma}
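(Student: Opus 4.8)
The plan is to mimic the structure of the proof of Lemma~\ref{NaNlem}, the key difference being that here $\hat\phi$ is defined using left cosets ($\alpha^{-1}\mathrm{Dom}(\phi)$) rather than right cosets, and that the output is required to land in $\mathcal N$ itself rather than $\mathcal N \cup \alpha\mathcal N$. First I would observe that since $[\,G : \mathrm{Dom}(\phi)\,] = 2$, every $w \in G$ lies either in $\mathrm{Dom}(\phi)$ or in $\alpha^{-1}\mathrm{Dom}(\phi)$, so $\hat\phi$ is defined on all of $G$. The crucial structural input is the normalization $\alpha = \langle 1, \alpha|_2\rangle\sigma$ made for every map in Table~\ref{tbl:Nuclei}: this forces $\hat\phi(w)$ to always be a \emph{restriction} of $w$ (to the first or second letter), with no extra factor of $\alpha$ left over. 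Concretely, if $w = \langle w_1, w_2\rangle$ then $\hat\phi(w) = \phi(w) = w_1$, and if $w = \langle w_1, w_2\rangle\sigma$ then $w \notin \mathrm{Dom}(\phi)$, so $w = \alpha^{-1} w'$ with $w' = \alpha w = \langle 1, \alpha|_2\rangle\sigma\langle w_1, w_2\rangle\sigma = \langle \alpha|_2 w_2, w_1\rangle$, hence $\hat\phi(w) = \phi(w') = \alpha|_2 w_2$. In either case $\hat\phi(w)$ is $w_1$, $w_2$, or $\alpha|_2 w_2$ — always a first-level restriction of $w$ possibly left-multiplied by the restriction $\alpha|_2$, which is itself a short fixed element.

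Next I would argue that iterating $\hat\phi$ therefore tracks a sequence of deeper and deeper restrictions of $w$ along some path in the tree, possibly accumulating prefixes from the $\alpha|_2$ factors. To control this, I would invoke the contracting property of the self-similar action on its faithful quotient (Theorem~\ref{thm:subhyp}): restrictions of any element eventually enter and remain in the (possibly infinite) nucleus $\mathcal N$. The one subtlety is the leftover $\alpha|_2$ factors: I would note that $\alpha|_2 \in \mathcal N$ by construction of the candidate nuclei (they are state-closed and contain restrictions of $\alpha$), so after finitely many steps $w$ itself restricts into $\mathcal N$, and then each subsequent $\hat\phi$ step produces an element of the form $(\text{restriction of a nucleus element})$ or $\alpha|_2 \cdot (\text{restriction of a nucleus element})$, i.e.\ a product of at most two nucleus elements whose restrictions, by the defining property of $\mathcal N$ (closure under restriction for the nucleus, and the ``restricts back into $N$'' check performed in the construction of the infinite nuclei), eventually lie in $\mathcal N$. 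Combining these, there is an $N$ so that $\hat\phi^{\circ n}(w) \in \mathcal N$ for all $n > N$.

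I expect the main obstacle to be exactly the bookkeeping of the residual $\alpha|_2$ prefixes: one must check that a product of the form $\alpha|_2 \cdot (\text{something already in } \mathcal N)$, upon further applications of $\hat\phi$, does not escape to arbitrarily long words but instead re-enters $\mathcal N$ after boundedly many steps. In the finite-nucleus cases this is immediate from contraction. In the infinite-nucleus cases (e.g.\ the $g$-maps $(1-2z)^2$, $1 - 1/(1-2z)^2$, $z^2$, and the two $1/z^2$ fixed points) one cannot appeal to contraction in word length directly, so I would instead rely on the explicit ``restrictions of $N^2$ eventually fall in $N$'' verification already carried out in the construction of these nuclei in \S\ref{sec:wreath} — the same finite check that justified calling $N$ a nucleus also bounds how long $\hat\phi$-orbits can wander before settling into $\mathcal N$. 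This reduces the lemma to the case analysis already implicit in Table~\ref{tbl:Nuclei}, together with the uniform normalization $\alpha|_1 = 1$.
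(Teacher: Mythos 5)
There is a genuine gap, and it comes from a computational error in the active case. Under the multiplication convention the paper fixes for wreath recursions, $\bigl\langle g|_1,g|_2\bigr\rangle\pi\cdot\bigl\langle h|_1,h|_2\bigr\rangle\tau=\bigl\langle g|_1h|_{\pi(1)},g|_2h|_{\pi(2)}\bigr\rangle\pi\tau$, one has for $w=\langle w_1,w_2\rangle\sigma$ that
$$\alpha w=\langle 1,\alpha|_2\rangle\sigma\,\langle w_1,w_2\rangle\sigma=\langle w_2,\;\alpha|_2 w_1\rangle,$$
so $\hat{\phi}(w)=\phi(\alpha w)=w_2$ exactly. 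Your computation $\alpha w=\langle \alpha|_2 w_2, w_1\rangle$ uses a different convention and produces a spurious prefix $\alpha|_2$; but the entire point of the normalization $\alpha=\langle 1,\alpha|_2\rangle\sigma$ (i.e. $\alpha|_1=1$), emphasized in Section \ref{sec:wreath} and already exploited in Lemma \ref{NaNlem}, is that no such leftover factor appears: $\hat{\phi}(w)$ is always a first-level restriction of $w$, on the nose. Once this is in place the lemma is immediate, since by construction of the (possibly infinite) nucleus the restrictions of any element eventually enter and remain in $\mathcal{N}$; no further bookkeeping is needed.

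Because of the wrong formula, the second half of your argument does not close. The verification made when building the candidate nuclei is that every element of $N^2$ lands in $N$ after a bounded number of \emph{restrictions}; but under your formula each application of $\hat{\phi}$ may insert a fresh left factor $\alpha|_2$, so the $\hat{\phi}$-orbit is no longer a sequence of restrictions and that finite check does not bound it — nothing in your sketch rules out the word accreting prefixes indefinitely and the orbit living in $\mathcal{N}^2$ or longer products forever. A secondary point: the property ``restrictions eventually enter and remain in $\mathcal{N}$'' is the defining property of the (possibly infinite) nucleus as computed in Table \ref{tbl:Nuclei}; it is not a consequence of Theorem \ref{thm:subhyp}, which only says the image of $\mathcal{N}$ in the faithful quotient is finite and would not by itself place $\hat{\phi}^{\circ n}(w)$ in $\mathcal{N}$. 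Redo the single wreath-product computation with the paper's convention and the proof collapses to the two lines intended.
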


\begin{proof}

If $w=\left<w_1,w_2\right>\in \text{Dom}(\phi)$, then $\hat{\phi}(w) = w_1$.
If $w=\left<w_1,w_2\right>\sigma\in \alpha^{-1}\text{Dom}(\phi)$, then
$$\hat{\phi}(w) = \phi(\alpha w) = \phi(\left<1,\alpha|_2\right>\sigma \left<w_1, w_2\right>\sigma) = w_2$$
Thus, $\hat{\phi}(w)$ is always equal to a restriction of $w$.
\end{proof}

\begin{proof}[Theorem \ref{thm:FGA}]

To determine the global dynamics of the pullback on curves, by Lemmas \ref{phihatdef} and \ref{phihatNlem} we need only examine the dynamics of $\phi$ on the set $\{(x^n)^w|x\in\{\alpha,\beta,\gamma\},n\in\mathbb{Z}, w\in\mathcal{N}^{per}\}$ where $\mathcal{N}^{per}$ is the set of periodic elements of $\hat{\phi}$ in $\mathcal{N}$. The results of these computations are given in Table \ref{tbl:phibar}.

%
%
%

For example, take the virtual endomorphism $\phi$ arising from the wreath recursion in the last row of Table \ref{tbl:phibar}:
$$\phi(\alpha^2) = \delta\qquad \phi(\beta)=\alpha\qquad\phi(\beta^\alpha) = 1$$
Checking the dynamics of $\hat{\phi}$ on $\mathcal{N}$, we see that all elements eventually map to the identity.
So, we need only consider the powers of $\alpha$, $\beta$, and $\delta$ 
(we use $\delta$ here in place of $\gamma$ since it arises naturally in the computations with $\phi$).
Further, since the iteration under $\hat{\phi}$ always lands in the domain of $\phi$, we need only consider the restriction under the first coordinate (i.e. the image under $\phi$).
Even powers of $\alpha$ are taken to powers of $\delta$,
powers of $\beta$ are taken to powers of $\alpha$, and
even powers of $\delta$ are taken to powers of $\beta$.
Thus, we have the finite global attractor as in Table \ref{tbl:phibar}.
\end{proof}

\end{document}